\documentclass[10pt]{amsart}
\usepackage{amsmath,amssymb,xy,array}
\usepackage[T1]{fontenc}
\usepackage{amsfonts}
\usepackage{pgfplots}
\pgfplotsset{compat=1.15}
\usepackage{mathrsfs}
\usetikzlibrary{arrows}
\pagestyle{empty}

\definecolor{ttttff}{rgb}{0.2,0.2,1}
\definecolor{qqqqcc}{rgb}{0,0,0.8}
\definecolor{wrwrwr}{rgb}{0.3803921568627451,0.3803921568627451,0.3803921568627451}
\usepackage{amsmath}
\usepackage{amssymb}
\usepackage{amsthm}
\usepackage[cp850]{inputenc}
\usepackage[makeroom]{cancel}
\usepackage{hyperref}
\usepackage{graphicx}
\usepackage{fancyhdr}
\usepackage{tikz}
\usepackage{longtable}
\usepackage{geometry}
\usepackage{textcomp}
\usetikzlibrary{trees}
\newcounter{myenumi}
\renewcommand{\themyenumi}{(\roman{myenumi})}
\newenvironment{myenumerate}{%
	\setlength{\parindent}{0pt}
	\setcounter{myenumi}{0}
	\bigskip
	\renewcommand{\item}{
		\par
		\refstepcounter{myenumi}
		\makebox[2.5em][l]{\themyenumi}
	}
}{
	\par
	\bigskip
	\noindent
	\ignorespacesafterend
}

\newlength{\defbaselineskip}
\setlength{\defbaselineskip}{\baselineskip}

\newcommand\restr[2]{{
		\left.\kern-\nulldelimiterspace 
		#1 
		\vphantom{\big|} 
		\right|_{#2} 
}}

\makeatletter
\newcommand{\tpitchfork}{%
	\vbox{
		\baselineskip\z@skip
		\lineskip-.52ex
		\lineskiplimit\maxdimen
		\m@th
		\ialign{##\crcr\hidewidth\smash{$-$}\hidewidth\crcr$\pitchfork$\crcr}
	}%
}

\setcounter{MaxMatrixCols}{30}
\providecommand{\U}[1]{\protect\rule{.1in}{.1in}}
\providecommand{\U}[1]{\protect\rule{.1in}{.1in}}
\providecommand{\U}[1]{\protect\rule{.1in}{.1in}}
\providecommand{\U}[1]{\protect\rule{.1in}{.1in}}
\providecommand{\U}[1]{\protect\rule{.1in}{.1in}}
\input{xy}
\xyoption{all}
\setlength{\textheight}{210mm} 
\setlength{\topmargin}{0.46cm}
\setlength{\textwidth}{152mm} 
\setlength{\evensidemargin}{0.60cm}
\setlength{\oddsidemargin}{0.60cm}

\usepackage{amsfonts}
\usepackage{amssymb,amsthm}
\usepackage{verbatim}
\usepackage{hyperref} 
\usepackage[T1]{fontenc}
\usepackage{amsmath}
\usepackage{hyperref}
\usepackage{enumerate}
\usepackage{tikz}
\usepackage{color}

\setcounter{MaxMatrixCols}{50}

\newcommand{\QED}{\ifhmode\unskip\nobreak\fi\quad {\rm Q.E.D.}} 

\newtheorem{thm}{Theorem}[section]

\newtheorem{remark}[thm]{Remark}

\newtheorem{Lemma}[thm]{Lemma}
\newtheorem{Proposition}[thm]{Proposition}
\newtheorem{Problem}{Problem}
\newtheorem{Corollary}[thm]{Corollary}

\theoremstyle{definition}

\newtheorem{Definition}[thm]{Definition}

\newtheorem{Example}[thm]{Example}

\hypersetup{pdfpagemode=UseNone}
\hypersetup{pdfstartview=FitH}

\begin{document}
	\title{Monodromies of Projective Structures on Surface of Finite-type}
	-
	\author[Genyle Nascimento]{Genyle Nascimento}
	\address{\sc
		Universidade Federal Fluminense\\
		Campus Gragoat\'a, Rua Alexandre Moura 8 - S\~ao Domingos\\
		24210-200, Niter\'oi, Rio de Janeiro\\ Brazil}
	\email{	genyle.nascimento@gmail.com
	}
	
	\subjclass[2021]{Primary 30F99, 32S65; Secondary 14H60, 53A55}
	\date{\today}
	
	\maketitle

	\setcounter{tocdepth}{1}

	\begin{abstract}
		We characterize the monodromies of projective structures with fuchsian-type singularities. Namely, any representation from the fundamental group of a Riemann surface of finite-type in $PSL_2(\mathbb{C})$ can be represented as the holonomy of branched projective structure with fuchsian-type singularities over the cusps. We made a geometrical/topological study of all local conical projective structures whose Schwarzian derivative admits a simple pole at the cusp. Finally, we explore isomonodromic deformations of such projective structures and the problem of minimizing angles. 
	\end{abstract}
	
	\tableofcontents
	%
	%
	\section{Introduction}
	This theory has its roots in the study of automorphic functions and differential equations by Klein \cite[Part 1]{K}, Poincar\'e \cite{P}, Riemann \cite{R}, and others in the late nineteenth century (see Hejhal's works \cite{He1}, \cite{He} for further historical discussion and references).
	
	 A complex projective structure on an oriented surface is a distinguished system of local coordinates modeled in $\mathbb{CP}^1$ in such a way that transition maps extend to homographies, i.e., lie in $PSL_2(\mathbb{C})$. Branched projective structures on closed orientable surfaces are given by atlases where local charts are finite branched coverings and transition maps lie in $PSL_2(\mathbb{C})$. 
	
	We know that if $S$ is a surface with a projective structure modeled in the projective space $\mathbb{CP}^1$, then there is a pair $(dev,\rho)$ (unique up to the action of automorphisms of $\mathbb{CP}^1$), where $ dev: \tilde{S} \rightarrow \mathbb{CP}^1$, defined on the universal covering of $S$ is a projective immersion equivariant with respect to the homomorphism $\rho:\pi_1(S) \rightarrow Aut(\mathbb{CP}^1)$. Two projective structures are equivalent if the developing maps $dev$ differ by a homography.

	We use a point of view for projective structure that  is a description as in which case the monodromy is conjugated in Goldman's thesis (\cite{GoT}), that associated a projective structure to a triple $(\pi, \mathcal{F}, \sigma)$ where let $ \rho: \pi_1 (S) \rightarrow Aut (\mathbb {CP}^1) $ be a representation, there exists a natural bijection between equivalence classes of complex projective structures with monodromy $ \rho $ in $ S $ and sections $\sigma$ of the $\mathbb{CP}^1$-bundle $ S \times_\rho \mathbb{CP}^1 $, suspension of $\rho$, that are transversal to the foliation $\mathcal{F}$ obtained by quotienting the horizontal foliation of $ \tilde{S} \times \mathbb{CP}^1 $.

A natural question about complex projective structures and their monodromy representations is to describe which representations can be realized as monodromy of a projective structure. In the case of closed surfaces of genus $g\geq2$, Gallo-Kapovich-Marden \cite[2000]{GKM} showed that non-elementary representations are monodromies of projective structures with at most one branch point.

They do not prescribe the complex structure in advance, rather it
is determined as part of the solution. The need to introduce a branch point in the Theorem is however reminiscent of the need for "apparent
singularities" in the theory about linear ordinary
differential equations on Riemann surfaces introduced by Poincar\'e.

Also in \cite[2000]{GKM}, they listed some open problems that we treat here about complex projective structures: 

	\begin{Problem}
		\textit{Prove and/or explore the existence and non-uniqueness of complex projective structures with given monodromy in punctured surfaces.}
	\end{Problem} 
\begin{Problem}\label{minimizing}
	\textit{Make precise and optimize the connection between branching divisors and monodromy. Namely, compute the function $d:Hom(\pi_1(S),PSL_2(\mathbb{C}))\rightarrow \mathbb{Z}$,
		where $d(\rho)$ is the smallest integer for which there exists a branched complex projective
		structure with branching divisor of degree d and monodromy $\rho$.}
\end{Problem}



	The problem of building complex projective structures on surfaces of finite-type had already been explored by Poincar\'e through his studies in solving linear differential equations to come at the Uniformization Theorem even though this was not his initial goal.

	We define a singularity of \textit{Fuchsian-type} as a point such that around it there is a map that, up to local holomorphic coordinate change, is given by $ z^\alpha $, $ \alpha \in \mathbb{C}^* $, or $ \log z + \frac{1}{z^n} $, $ n \in \mathbb{N} $. We define a \textit{singular projective structure of Fuchsian-type} in $S$ as projective structures where a finite number of the singularities of this type are allowed. These singularities are the same considered by Fuchs in his studies about differential equations (\cite{SG}).
	

We know that every projective structure on a surface has a subjacent complex structure. If we consider a surface of finite-type $S^*:=S\setminus P$, where $S$ is a compact Riemann surface and $P$ is a finite subset $\{p_1,\ldots,p_k\}$ of $S$, the complex structure extends in a unique way to $S$.
	
To obtain a result, analogous to the Gallo-Kapovich-Marden's Theorem to Riemann surfaces of finite-type, we will prove:
	
	\begin{thm}\label{teoremaexistencia}
		Let $S$ be a compact Riemann surface of any genus and $\{p_1, p_2, \ldots, p_k \} \subset S $ a finite subset with $S \setminus S^*=\{p_1, \ldots, p_k \} $. Given a representation $\rho: \pi_1(S^*) \rightarrow PSL_2(\mathbb{C})$ there exists a singular projective structure of the Fuchsian-type in $S$ with monodromy $\rho$.
	\end{thm}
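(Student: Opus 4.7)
The plan is to adopt Goldman's suspension point of view recalled in the Introduction: projective structures on $S^*$ with monodromy $\rho$ correspond bijectively to sections $\sigma$ of the flat $\mathbb{CP}^1$-bundle $S^*\times_\rho\mathbb{CP}^1$ that are transverse to the horizontal foliation $\mathcal{F}$. I would first construct such sections on punctured neighborhoods of each $p_i$ from explicit Fuchsian-type local models, and then extend them over $S^*$ by a Thurston/Gallo--Kapovich--Marden style global argument, allowing interior branch points (apparent singularities) exactly as in \cite{GKM}.

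For the local step, fix a small simple loop $\gamma_i\in\pi_1(S^*)$ around each puncture $p_i$ and set $M_i:=\rho([\gamma_i])\in PSL_2(\mathbb{C})$. The goal is to construct a Fuchsian-type developing map $dev_i$ on a punctured disk $D_i^*$ around $p_i$ whose monodromy is $PSL_2(\mathbb{C})$-conjugate to $M_i$. If $M_i$ is not parabolic (i.e. loxodromic, elliptic, or the identity), write its multiplier as $e^{2\pi i\alpha_i}$ with $\alpha_i\in\mathbb{C}^*$ (unique modulo $\mathbb{Z}$) and take $dev_i(z)=z^{\alpha_i}$. If $M_i$ is a non-trivial parabolic element, take $dev_i(z)=\log z+1/z^{n_i}$ for some $n_i\in\mathbb{N}$, choosing the normalization so that the translation parameter of the monodromy of $dev_i$ matches that of $M_i$. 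A direct local computation (carried out in the later geometric/topological study of conical projective structures in the paper) should show that these are Fuchsian-type singularities in the sense of the definition and that their monodromies exhaust every conjugacy class in $PSL_2(\mathbb{C})$.

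For the global step, transport each $dev_i$ to a section $\sigma_i$ of the flat $\mathbb{CP}^1$-bundle over $D_i^*$, transverse to $\mathcal{F}$. On the compact bordered surface $S_0:=S\setminus\bigcup_i D_i$ I would apply the transverse-section construction of Gallo--Kapovich--Marden: when the restriction of $\rho$ to $\pi_1(S_0)$ is non-elementary, their method yields a section of $S_0\times_\rho\mathbb{CP}^1$ transverse to $\mathcal{F}$ with prescribed boundary values $\sigma_i|_{\partial D_i}$, at the cost of at most one interior branch point. The elementary (virtually abelian) case would be treated separately by an explicit construction, using the invariant point or pair of points of $\mathbb{CP}^1$ fixed by the image of $\rho$. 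Gluing the $\sigma_i$ to the section on $S_0$ along $\partial S_0$ produces the required singular projective structure on $S$, with Fuchsian-type singularities exactly over $P$ and global monodromy $\rho$.

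The hard part is the interplay between the local and global steps: the integer freedom in the choice of $\alpha_i$ (respectively $n_i$) must be exploited so that the local sections can be glued transversely to a global one along $\partial S_0$, with no further interior branching forced beyond the single apparent singularity allowed by the \cite{GKM} argument. Verifying that every conjugacy class of $PSL_2(\mathbb{C})$, including all parabolic classes and the identity, is realized as the monodromy of a genuine Fuchsian-type local model is the essential new geometric content; once this is in place, the gluing and transversality reduce to standard adaptations of the Gallo--Kapovich--Marden techniques.
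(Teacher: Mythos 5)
Your local analysis is sound and matches the paper's local models (charts $z^{\alpha}$ for non-parabolic monodromy and $\log z+\tfrac{1}{z^{n}}$ for parabolic monodromy, realized as projections along the leaves of the local Riccati models), but the global step is a genuine gap. You invoke a ``transverse-section construction of Gallo--Kapovich--Marden'' on the bordered surface $S_0=S\setminus\bigcup_i D_i$ that produces a section transverse to $\mathcal{F}$ \emph{with prescribed boundary values} $\sigma_i|_{\partial D_i}$. No such relative version exists in \cite{GKM}: their construction works with pants decompositions of a closed surface, requires non-elementarity, and does not prescribe boundary data. Making this relative statement precise (and handling the elementary case, which you only mention in passing) is essentially the whole difficulty, so as written the argument does not close.

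There is a second, more structural problem: the theorem is meant to produce a projective structure compatible with the \emph{prescribed} complex structure on $S$ (the introduction stresses this as the main difference from \cite{GKM}, where the complex structure is determined only as part of the solution). A GKM-style topological transverse section on $S_0$ is not holomorphic, so the complex structure induced by your glued charts would not be the given one. The paper avoids both issues by a different route: it compactifies the suspension $S^*\times_\rho\mathbb{CP}^1$ to a $\mathbb{CP}^1$-bundle over the compact surface $S$ using the local Riccati models of Lemma \ref{compactidefibrados}, invokes Tsen's theorem (every ruled surface over a curve has a holomorphic section) together with the flipping description of \cite{LM} to produce a non-trivial \emph{holomorphic} section not invariant by $\mathcal{F}_\rho$, and then reads off the charts by projecting the section along the leaves using the Liouvillian first integrals $z^{\alpha}w^{-1}$ and $\log z - w$. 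Tangencies of the section with the foliation at regular points give the interior branch points (possibly many, which the statement allows), and the computation at the invariant fibers gives exactly your local models. If you want to salvage your approach, you would need to either prove the relative GKM statement with holomorphic control, or switch to the algebro-geometric existence of holomorphic sections as the paper does.
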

	
	We prescribe the monodromy, but differently of Gallo-Kapovich-Marden, we prescribe a complex structure before building the projective structure. We do not control the local models, i.e., they cannot be optimal, it can have a finite number of singularities with trivial local monodromy to exist outside the cusps $S \setminus S^*$, i.e., branch points, and angle excesses at the cusps.
	
	The proof uses techniques from algebraic geometry precisely that ruled surfaces have sections and ideas developed by Loray and Marín \cite{LM}. Let $ \pi: P \rightarrow S $ be a $ \mathbb{P}^1$-bundle over $S$ associated to the monodromy representation $ \rho: \pi_1 (S^*) \rightarrow PSL_2 (\mathbb{C})$ of a singular projective structure of Fuchsian-type in $S$. This $\mathbb{P}^1$-bundle is equipped with a Riccati foliation $ \mathcal{F}_\rho$ (see Brunella \cite{B}) with the same monodromy obtained by compactification the suspension of the representation $\rho$. The developing map of the projective structure defined in $ S $ defines a non-trivial holomorphic section $ \sigma$ of $\mathbb{P}^1$-bundle $ \pi$ and non-invariant by $\mathcal{F}_\rho$. After, we will study the relation between isomonodromic projective structures and flipping of a fiber.
	
	This result goes back to a work by Loray and Pereira \cite{LP} that restricted to projective surfaces, it is possible to build transversely projective foliations 
	with prescribed monodromy. The approach is similar to ours, although they use other tools such as Deligne's work on the Riemann-Hilbert problem is used to build a meromorphic plane connection in a rank 2 vector bundle whose projectivization gives the $\mathbb{CP}^1$-bundle and build a meromorphic section generally transversal to the foliation using also fiber bundle's theory.

In the presence of branching, or excess we show that the solution, using the surgery of moving branch points, given in Theorem \ref{teoremaexistencia} is not rigid: we can isomonodromically deform the structure if we have the models $\log z+\frac{1}{z^{n}}$, $n\geq 2$ e $z^{\alpha}$, $\Re \alpha>1$ .
	 
We generalize this surgery when one of the singularities involved is of Fuchsian-type using the topological/geometrical description of projective charts around the singularities   and we will show the inverse surgery of moving branch points with one of the singularities has non-parabolic monodromy and excess angles.
	
We extended the notion of branching order at singular points of Fuchsian-type, in fact, it will follow from Theorem \ref{teoremaexistencia} that around each singular point $ p $ of the projective structure of Fuchsian-type $\sigma$ with given monodromy $\rho: \pi_1(S^*) \rightarrow PSL_2(\mathbb {C})$ and  the projective charts are defined by $z^{\alpha + n_p} $, $ 0 <\Re \alpha \leq1 $ or $ \log z + \frac{1}{z^{n_p}} $. We define as $ n_p \in \mathbb{Z} $ the branching order  at each singular point $p$ and the sum $e(\sigma) = \sum_{p \in S} n_p $ as branching order of the projective structure $\sigma $. 

We obtain the following result for representations that monodromies of projective structures of Fuchsian-type not of minimum branching order:
	
	\begin{thm} \label{structcomexcessos}
		Let $\rho:\pi_1(S^*) \rightarrow PSL_2(\mathbb{C})$ be a representation. A projective structure of Fuchsian-type with monodromy $\rho$ has an odd branching order if and only if:
		\begin{enumerate}
			\item $w_2(P)$ is even and the number of cusps with non-trivial local monodromy is odd; or
			\item $w_2(P)$ is odd and the number of cusps with non-trivial local monodromy is even.
		\end{enumerate}
	where $w_2(P)$ is the 2nd Stiefel-Whitney class of the bundle  $\pi:P\rightarrow S$.
	\end{thm}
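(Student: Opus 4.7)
The plan is to relate $e(\sigma) \pmod 2$ to the $2$nd Stiefel--Whitney class of the associated $\P^1$-bundle $P$, with a correction coming from the cusps with non-trivial local monodromy. By Theorem \ref{teoremaexistencia}, a projective structure of Fuchsian-type with monodromy $\rho$ corresponds to a holomorphic section $\sigma \colon S \to P$ of the (compactified) $\P^1$-bundle $\pi \colon P \to S$, non-invariant by the Riccati foliation $\mathcal{F}_\rho$. A standard fact for $\P^1$-bundles over a closed oriented surface is that the parity of the self-intersection of any section is a topological invariant of the bundle, equal to $w_2(P)$: any two sections differ by a divisor class whose self-pairing is an even integer. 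Hence
$$\sigma \cdot \sigma \;\equiv\; w_2(P) \pmod{2}.$$

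The core of the argument is to compute $\sigma\cdot\sigma \pmod 2$ in terms of $e(\sigma)$ and the local data at the cusps. Using $\sigma\cdot\sigma = \deg(\sigma^* T_\pi)$ and the relation between $\sigma\cdot\sigma$ and the tangencies of $\sigma$ with $\mathcal{F}_\rho$, I proceed point by point. Away from the cusps, at a branch point of branching order $n_p$ (chart $z^{1+n_p}$) the section is tangent to the horizontal foliation with order $n_p$. At a cusp with trivial local monodromy, $\mathcal{F}_\rho$ extends smoothly across the fiber $\pi^{-1}(p_i)$, the same local analysis applies, and the local contribution is $n_p$. At a cusp with non-trivial local monodromy, $\mathcal{F}_\rho$ extends with a logarithmic singularity along an invariant fiber of $\pi$, and $\sigma(p_i)$ sits at a fixed point of the local monodromy on that fiber. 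A direct local computation using the charts $z^{\alpha + n_p}$ (conical case) and $\log z + z^{-n_p}$ (parabolic case), together with the local normal form of $\mathcal{F}_\rho$ near its singularity, shows that the local contribution to $\sigma\cdot\sigma$ differs from $n_p$ by an odd integer.

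Combining the local contributions one obtains
$$\sigma\cdot\sigma \;\equiv\; e(\sigma) \,+\, \#\bigl\{i : \rho(\gamma_{p_i}) \ne \mathrm{Id}\bigr\} \pmod 2,$$
which together with $\sigma\cdot\sigma \equiv w_2(P) \pmod 2$ yields
$$e(\sigma) \;\equiv\; w_2(P) \,+\, \#\bigl\{i : \rho(\gamma_{p_i}) \ne \mathrm{Id}\bigr\} \pmod{2}.$$
This is precisely the stated dichotomy: $e(\sigma)$ is odd exactly when one of $w_2(P)$ and the number of cusps with non-trivial local monodromy is odd and the other is even. The main obstacle is the local bookkeeping at a Fuchsian cusp with non-trivial monodromy: one must track how the non-integer exponent $\alpha$ in $z^{\alpha + n_p}$, together with the logarithmic pole of $\mathcal{F}_\rho$ along the invariant fiber through $\sigma(p_i)$, conspire to give the odd correction; the parabolic model $\log z + z^{-n_p}$, where $\sigma$ approaches the unique parabolic fixed point, requires an analogous but distinct local analysis.
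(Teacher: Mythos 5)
Your argument is correct and follows essentially the same route as the paper: both rest on the parity identity $\sigma\cdot\sigma\equiv w_2(P)\pmod 2$ combined with $\sigma\cdot\sigma\equiv e(\sigma)+k_0\pmod 2$, where $k_0$ is the number of cusps with non-trivial local monodromy. The only difference is in how the second congruence is justified: you assert a point-by-point local computation in which each non-trivially monodromic cusp contributes an odd correction to the tangency count, whereas the paper derives the exact formula $\sigma\cdot\sigma = tang(\mathcal{F}_\rho^{min},\sigma(S)) + \chi(S) - k_0$ (with $e(\sigma)=tang(\mathcal{F}_\rho^{min},\sigma(S))$ in the minimal compactification) from Brunella's relations $T^*_{\mathcal{F}_\rho}=\pi^*(K_S)\otimes\mathcal{O}_P\left(\sum_j F_j\right)$ and $T_{\mathcal{F}_\rho}\cdot\sigma=\sigma\cdot\sigma-tang(\mathcal{F}_\rho,\sigma)$, so that your ``odd correction'' at each such cusp is precisely the $-1$ contributed by the invariant fiber of multiplicity one.
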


Minimizing the branching order of a singular projective structure of Fuchsian-type in a surface $S$ with monodromy $\rho$ is equivalent minimizing the index $tang(\mathcal{F}_\rho, \sigma(S))$ (see Brunella \cite{B}[Section 2.2]) of a Riccati foliation and a section $\sigma$ of the $\mathbb{CP}^1$-bundle with a specific compactification of $\mathbb{CP}^1$-bundle that defines a projective structure.

We define $d(\rho) = \min \{e(\sigma): \sigma$ is a projective structure of Fuchsian-type with monodromy $\rho \}$. For the Theorem cases, we will necessarily have $ d (\rho) \geq 1 $. Given a representation $\rho:\pi_1(S^*) \rightarrow PSL_2(\mathbb{C})$, we can rewrite the Problem \ref{minimizing} as what is the minimum branching order of a projective structure of Fuchsian-type with this monodromy representation? For the Theorem \ref{structcomexcessos} cases, do we have $d(\rho) = 1 $? And in other cases do we have $d(\rho)=0$?

Independently, Gupta \cite{Gu} has also shown a version to the Gallo-Kapovich-Marden's Theorem with an analogous statement by using techniques
from hyperbolic geometry in dimension 3. One of the biggest differences with our work is a restrictive hypothesis in monodromy representations that will not provide branch points outside the cusps.

Calsamiglia, Deroin, and Francaviglia in \cite{CDF} proved that two-branched projective structures on compact surfaces with the same quasi-Fuchsian holonomy and same branching order are related by moving branch points. It would be interesting to do the same for the case of projective structures with Fuchsian-type singularities. As in \cite{CDF}, we can use the surgery debubbling for reduce the branching order.




	\subsection*{Acknowledgments}This paper is the result of the author's PhD thesis that to thank the advisor G. Calsamiglia for introduce this subject and for help and orientation. The author also acknowledge
	the financial support from Coordenação de Aperfeiçoamento de Pessoal de Nível
	Superior - Brasil (CAPES) - Finance Code 001 and  CAPES-Mathamsud.
	I am grateful to the following institutions for the very nice working conditions
	provided: École Normale Supérieure de Paris and Universidade Federal Fluminense (UFF). 
	I am thankful to B. Deroin, T. Fassarella and A. Muniz for their comments. 
	
\section{Preliminaries}

\subsection{Projective Structures}On a surface S, a \textit{complex projective structure} is defined by an atlas $\{U_i, f_i\}$ of homeomorphisms $ f_i: U_i \rightarrow V_i $, $ V_i \subset  \mathbb{CP}^1$, where the transition maps $ f_i = \phi_ {ij} \circ f_j $ are restrictions of Möbius transformations $\phi_{ij}\in PGL_2(\mathbb{C})$.

We can define of alternative way: let $ \tilde{S} \rightarrow(S,z_0)$ be a universal covering of $S$ based on $z_0 $. A pair $(dev, \rho)$ where $dev: \tilde {S} \rightarrow \mathbb{CP}^1$ is a local homeomorphism equivariant to respect the monodromy representation $\rho:\pi_1(S,z_0) \rightarrow PGL_2(\mathbb{C})$ defines a complex projective structure on $S$. Two projective structures on $ S $ where the developing maps differ by homography are equivalent.
	
A Riemann surface $S^*$ is of  \textit{finite-type} if it is biholomorphic to $S^*:=S\setminus P$, where $S$ is a  compact Riemann surface and $P$ is a finite subset $\{p_1,\ldots,p_k\}$ of $S$, we call $p_i$ of cusps. 

We define a \textit{singular projective structure} in $S$ as a complex projective structure in $S^* = S\setminus \{p_1,\ldots, p_k \}$, where $ \{p_1,\ldots,p_k\} \subset S$ is a finite subset of $S$ and each $ p_i $ is called the singularity of the structure.
	
	The restriction of a projective structure to an open subset $ U \subset S^*$ produces a projective structure in $U $ and we can consider the structures as a germ in the local ring and consider equivalence of germs of projective structures around their singularities.
	
	The monodromy of a singularity is the monodromy of the restriction of the projective structure to a disk around it.
	
	\begin{Example}
		In $S^*=\mathbb{CP}^1\setminus\{0,\infty\}$ with non-trivial monodromy, we can build projective charts as the branches of the multivalued map $z^\alpha$ with $\alpha \in \mathbb{C} \setminus \mathbb{Z} $ fixed and monodromy around the cusps conjugate to $w \mapsto e^{2\pi i\alpha}w$, as well as, the branches of $\log z + \frac{1}{z^n}$ will also define a singular projective structure with monodromy $ w \mapsto w + 2 \pi i $.
	\end{Example}
	
	We define a singularity of \textit{Fuchsian-type} as a point such that around it there is a map that, up to local holomorphic coordinate change, is given by $ z^\alpha $, $ \alpha \in \mathbb{C}^* $, or $ \log z + \frac{1}{z^n} $, $ n \in \mathbb{N} $. We define a \textit{singular projective structure of Fuchsian-type} in $S$ as projective structures where only  singularities of this type are allowed.
	
	We remark that singularities of Fuchsian-type with trivial monodromy have a simple topological description that comes from branched coverings. 	
	In particular, a branch point is a singularity of Fuchsian-type.

In an analytic approach, a projective structure is represented by a quadratic differential on a Riemann surface, which is extracted from the Schwarzian derivative.

Considering differential equations with poles corresponding to singular projective structures, works by Fuchs and later by Schwarz give meaning to the nomenclatures used by Poincar\'e such as ``Fuchsian functions'' and ``Fuchsian groups''. For equivalence between complex projective structure (compatible with the complex structure) in S and second-order linear differential equation and more details can be seen in \cite[Chapitres 8,9]{SG}.

We say that a reduced linear differential equation with a $h$ meromorphic coefficient
	\begin{equation}
		\label{eqfuchsiana}
		\frac{d^2u}{dz^2} + hu = 0
	\end{equation}
	is Fuchsian on $z=z_0$ if $h$ has a maximum of a double pole in $z_0$.
	
	The chart $w$ of the projective structure around $z_0$ is the quotient $w =\frac{u_1}{u_2}$ of two independent solutions of the equation (\ref{eqfuchsiana}) around $z_0$, or better, as a solution of Schwarzian equation
	\begin{equation}\label{scharzianequation}
		S_z(w): = \Bigg \lbrace \left (\frac {w'' z)} {w'(z)} \right)' - \frac{1}{2} \left (\frac{w ''(z)}{w'(z)} \right)^2 \Bigg \rbrace = 2h,
	\end{equation}
	where $ h $ is the coefficient of the equation (\ref{eqfuchsiana}). Then, we will say that the projective structure around $z_0 $ has a Fuchsian-type singularity in $z_0$. A meromorphic quadratic differential defined by the Schwarzian derivative of projective charts has the form
	\begin{equation} \label{difquadmeromorfa}
		\Bigg \lbrace \frac{1-\alpha^2}{2z^2}+\sum_{n\geq-1} b_nz^n\Bigg \rbrace dz^2,
	\end{equation}
	in local coordinates around each singularity of Fuchsian-type, $\alpha, b_n \in \mathbb{C}$.
	Conversely, the quotient of two linearly independent solutions of Schwarzian equation (\ref{scharzianequation}) defines a projective chart of Fuchsian-type.
	
	Fuchs-Schwarz \cite[Théorème IX.1.1.]{SG} resolves the Schwarzian equation in the neighborhood of a double pole which the quotient of solutions are, in local coordinate around the pole, $y^\alpha$, $ \alpha \in \mathbb{C}^*$ and $\frac{1}{y^\alpha} + \log y$ if $\alpha \in \mathbb{N}$.

	About the projective structure of Fuchsian-type over the three-punctured sphere, we can see as the quotient of solutions of a Schwarzian equation explicitly.
	
	\begin{thm} \label{exemploesferamenos3}(\cite{SG})Given $\alpha_0,\alpha_1,\alpha_{\infty}\in\mathbb{C}\setminus \mathbb{Z}$, the Schwarzian given by 
		\begin{equation}\label{eqschwarztriang}
			\Bigg\lbrace\frac{1-\alpha_0^2}{2z^2}+\frac{1-\alpha_1^2}{2(z-1)^2}-\frac{\alpha_0^2+\alpha_1^2-\alpha_{\infty}^2-1}{2z(z-1)}\Bigg\rbrace dz^2
		\end{equation}
		defines the only projective structure in $\mathbb{CP}^1\setminus\{0,1,\infty\}$ with charts projectively equivalent to $z\mapsto z^{\alpha_i}$, $i=0,1,\infty$, at the cusps. If $\alpha_0,\alpha_1,\alpha_{\infty}\in \mathbb{Z}$, the Laurent series expansion around $z_i\in\{0,1,\infty\}$ of the Schwarzian given by
		$$\Bigg\lbrace\frac{1-\alpha_i^2}{2(z-z_i)^2}+\sum_{n\geq-1}a_n^{(i)}(z-z_i)^n\Bigg\rbrace dz^2$$
		is the meromorphic quadratic differential of a branched projective structure if and only if $a_n^{(i)}$ satisfies the indicial equation 
		$A_{\alpha_i}(a_{-1}^{(i)},\ldots,a_{\alpha_i-1}^{(i)})=0$ where $A_{\alpha_i}$ is a polynomial with coefficients in $\mathbb{C}$. Otherwise, the charts around the cusps $z_i$ is projectively equivalent to $z\mapsto \log z+\frac{1}{z^{\alpha_i}}$. 
	\end{thm}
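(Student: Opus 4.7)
The plan is to pass through the Schwarzian derivative to meromorphic quadratic differentials on $\mathbb{CP}^1$ with prescribed principal parts, and then to apply Fuchs--Schwarz locally at each cusp.

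First I would classify the relevant quadratic differentials. A projective structure of Fuchsian type on $\mathbb{CP}^1\setminus\{0,1,\infty\}$ with local model $z^{\alpha_i}$ at each cusp produces a meromorphic quadratic differential $q$ on $\mathbb{CP}^1$ that is holomorphic off $\{0,1,\infty\}$ and, by \eqref{difquadmeromorfa}, has at most a double pole at each cusp with principal part $(1-\alpha_i^2)/(2(z-z_i)^2)\,dz^2$; here one uses $S(z^{\alpha})=(1-\alpha^2)/(2z^2)$. Writing $q=f(z)\,dz^2$, partial fractions give
$$f(z) = \frac{A}{z^2} + \frac{C}{(z-1)^2} + \frac{B_1}{z} + \frac{B_2}{z-1},$$
and the requirement that $q$ have at most a double pole at $\infty$ in the chart $w=1/z$ would force $B_1+B_2=0$, killing the simple-pole term that the $dw^2/w^4$ factor would otherwise produce. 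Matching principal parts at $0$ and $1$ fixes $A=(1-\alpha_0^2)/2$ and $C=(1-\alpha_1^2)/2$, and expanding $f(1/w)/w^4$ to order $w^{-2}$ and equating with $(1-\alpha_\infty^2)/2$ pins down $B_2$; the arithmetic would then deliver the explicit formula \eqref{eqschwarztriang}. Existence and uniqueness of $q$ would follow from this dimension count.

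Second, I would reconstruct the projective chart from $q$ via the Schwarzian equation. By Fuchs--Schwarz \cite[Th\'eor\`eme IX.1.1]{SG}, at any cusp $z_i$ the quotient of two linearly independent solutions of $u''+hu=0$ is projectively equivalent to $z\mapsto z^{\alpha_i}$ whenever $\alpha_i\notin\mathbb{Z}$ (nonresonant indicial roots), settling the non-integer case of the theorem. For integer $\alpha_i$ I would run the Frobenius method at $z_i$: the two indicial roots differ by $\alpha_i\in\mathbb{Z}$, so the recursion for the second series solution has a potential obstruction precisely at step $\alpha_i$. Carrying the recursion inductively, the compatibility condition at that index becomes a polynomial identity $A_{\alpha_i}(a_{-1}^{(i)},\ldots,a_{\alpha_i-1}^{(i)})=0$ in the first $\alpha_i+1$ Laurent coefficients of $h$ at $z_i$; when $A_{\alpha_i}$ vanishes the recursion closes and the chart is the holomorphic branched model $z\mapsto z^{\alpha_i}$, while otherwise a logarithmic term must be introduced and, after projective normalization, the chart is $z\mapsto\log z+z^{-\alpha_i}$.

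The main obstacle is making $A_{\alpha_i}$ explicit: the combinatorics of the Frobenius recursion grow with $\alpha_i$ and a closed form is cumbersome. However, the statement only requires the existence of such a polynomial and the equivalence between its vanishing and the absence of logarithmic monodromy, both of which are standard outputs of the Frobenius algorithm already packaged in the classical Fuchs--Schwarz dichotomy \cite{SG}.
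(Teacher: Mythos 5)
The paper does not actually prove this statement: it is quoted from Saint-Gervais \cite{SG} (Th\'eor\`eme IX.1.1 and the surrounding discussion of the hypergeometric Schwarzian), so there is no internal proof to compare against. Your outline is the standard argument and is essentially correct: quadratic differentials on $\mathbb{CP}^1$ with poles of order at most two at $0,1,\infty$ form a three-dimensional space, prescribing the three double-pole coefficients $\tfrac{1-\alpha_i^2}{2}$ imposes three independent linear conditions (your map $(A,C,B_1)\mapsto(A,C,A+C-B_1)$ is invertible), and the local dichotomy between the power chart and the logarithmic chart is exactly the Fuchs--Schwarz/Frobenius resonance analysis that the paper itself records just before the theorem. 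Two cautions. First, the phrase ``principal part $(1-\alpha_i^2)/(2(z-z_i)^2)$'' overstates what the local model gives you: $z^{\alpha_i}$ only fixes the coefficient of the \emph{double} pole of the Schwarzian, while the residue term transforms inhomogeneously under local coordinate changes and is not prescribed a priori. Your computation in fact only matches double-pole coefficients, which suffices because those three conditions already determine $q$ uniquely, so this is a wording issue rather than a gap. Second, carrying the arithmetic through gives $B_2=\tfrac{\alpha_0^2+\alpha_1^2-\alpha_\infty^2-1}{2}$ and hence the cross term $+\tfrac{\alpha_0^2+\alpha_1^2-\alpha_\infty^2-1}{2z(z-1)}$, equivalently $-\tfrac{\alpha_0^2+\alpha_1^2-\alpha_\infty^2-1}{2z(1-z)}$; a check of the $z^{-2}$ coefficient at infinity confirms this sign, and the minus sign over $z(z-1)$ in the displayed formula \eqref{eqschwarztriang} appears to be a slip in the paper, so do not adjust your computation to match it.
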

	
	Since any three points in $\mathbb{CP}^1$ can be taken to $0,1$ and $\infty$ by a Möbius transformation, then the projective structures with three singularities on $\mathbb{CP}^1$ are completely determined by their indexes. 
	
	Let $[\gamma_0],[\gamma_1],[\gamma_\infty]\in\pi_1(\mathbb{CP}^1\setminus\{0,1,\infty\})$ be loops around each $i$, for $i=0,1,\infty$, which have the same base point, satisfy $[\gamma_0]\cdot[\gamma_1]\cdot[\gamma_\infty]=Id$. Since this relation, the monodromy representation  $\rho:\pi_1(\mathbb{CP}^1\setminus\{0,1,\infty\})\rightarrow PSL_2(\mathbb{C})$ must satisfy
	\begin{equation}\label{relacaomonodromialocal}
		\rho([\gamma_0])\cdot\rho([\gamma_1])\cdot\rho([\gamma_\infty])=Id.
	\end{equation}
	The transformation $\rho([\gamma_i])$ is a local monodromy around $i=0,1,\infty$ and conjugate in $PSL_2(\mathbb{C})$, in the non-parabolic case, to $w\mapsto e^{2\pi i\alpha_i}w$, $\alpha_i\in\mathbb{C}$. At the cusps with parabolic monodromy is conjugate to  $w\mapsto w+2\pi i$. The relation (\ref{relacaomonodromialocal}) is equivalent to  $\alpha_0+\alpha_1+\alpha_\infty\in\mathbb{Z}$ and the representation $\rho$ induces a complex projective structure in $\mathbb{CP}^1\setminus\{0,1,\infty\}$.

	\subsection{Compactification of $\mathbb{CP}^1$-bundles}
	Different from Gallo-Kapovich-Marden, we prescribe a complex structure before building the projective structure. It is allowed a finite number of singularities with  trivial local monodromy to exist outside the cusps $S \setminus S^*$, i.e., branch points. As it was done on Goldman's thesis in \cite{GoT}, we will build these projective structures through sections of the $\mathbb{CP}^1$-bundle obtained from the suspension of the given representation.
	
	We denote by $S\times_{\rho}\mathbb{CP}^1$ the suspension of a representation $\rho:\pi_1(S)\rightarrow PSL_2(\mathbb{C})$, the construction can be found in \cite[Chapter 5]{CLN}.
	
	Let $S^*=S\setminus\{p_1,\ldots,p_k\}$ be a Riemann surface of finite-type and $\rho: \pi_1(S^*)\rightarrow PSL_2(\mathbb{C})$ a representation, we can compactify the suspension $S^*\times_{\rho}\mathbb{CP}^1$ as a fiber bundle over $S$ provided with a Riccati (possibly singular) foliation\footnote{A foliation $\mathcal{F}$ on a compact connected surface $X$ is called \textit{Riccati foliation} if there exists a $\mathbb{CP}^1$-bundle $\pi:X\rightarrow B$ (possibly with singular fibers) whose generic fiber is tranverse to $\mathcal{F}$.
	}, where the fibers over the cusps are invariant curves and each contains one or two singularities of foliation.
	
	There are local models, can be found at Brunella \cite{B}, that allow us to compactify the suspension and its foliation over the cusps. 
	
	Before that, we need a previous result that allows us to paste the local models around the cusps to the suspension.
	
	Let $ \mathbb{D} = \{z \in \mathbb{C} \ | \ | z | <1 \} $ the unit disk centered on the source and $ \mathbb{D}^* =\mathbb{D} \setminus \{0 \} $. We will denote by $(\mathbb{D}^*, \mathcal {F}, \pi)$ a Riccati foliation $ \mathcal{F} $ defined in $ \mathbb{D}^* \times \mathbb{CP}^1 $ with $ \pi $ a $\mathbb{CP}^1$-bundle transversal to the foliation  $ \mathcal{F}$.
		
		\begin{Proposition} \label{folriccatilocbihol}
			Let $ (\mathbb{D}^*, \mathcal{F}_1, \pi_1) $ and $ (\mathbb{D}^*, \mathcal{F}_2, \pi_2) $ be Riccati foliations. There is a biholomorphism $ \phi: \mathbb{D}^* \times  \mathbb{CP}^1 \rightarrow \mathbb{D}^* \times \mathbb{CP}^1 $ that takes leaves of $ \mathcal{F}_1 $ to leaves $\mathcal{F}_2 $  and such that $ \pi_1 $ and $ \pi_2 $ are equivalent bundles if and only if the representations of holonomy are analytically conjugate.
			\end{Proposition}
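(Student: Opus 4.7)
The plan is to realize each Riccati foliation as a suspension on the universal cover and to translate the equivalence of foliated bundles into M\"obius conjugacy of monodromies. First I would use that every holomorphic $\mathbb{CP}^1$-bundle over the open Riemann surface $\mathbb{D}^*$ is trivial, so up to bundle isomorphism one can assume $\pi_1 = \pi_2$ is the first projection on $\mathbb{D}^*\times\mathbb{CP}^1$. I then pass to the universal covering $p:\mathbb{H}\to\mathbb{D}^*$, $z\mapsto e^{2\pi i z}$, whose deck group $\mathbb{Z}$ acts by $z\mapsto z+1$. The pulled-back foliation $\widetilde{\mathcal{F}_i}$ remains transverse to the fibers of $\mathbb{H}\times\mathbb{CP}^1\to\mathbb{H}$ and, since the base is simply connected, it integrates into a biholomorphism $\Phi_i:\mathbb{H}\times\mathbb{CP}^1\to\mathbb{H}\times\mathbb{CP}^1$ over the identity on $\mathbb{H}$ that straightens $\widetilde{\mathcal{F}_i}$ to the horizontal foliation $\{w=\mathrm{const}\}$. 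In these trivializing coordinates the generator of the deck action takes the form $\sigma_i(z,w)=(z+1,\rho_i(w))$, where $\rho_i\in PSL_2(\mathbb{C})$ is the analytic holonomy of $\mathcal{F}_i$ around the puncture.

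For the \emph{if} direction, assuming $\rho_2=g\rho_1 g^{-1}$ with $g\in PSL_2(\mathbb{C})$, the map $\widetilde{\phi}(z,w)=(z,g(w))$ satisfies $\widetilde{\phi}\circ\sigma_1=\sigma_2\circ\widetilde{\phi}$ and sends horizontal leaves to horizontal leaves, so it descends to the sought biholomorphism $\phi$ of $\mathbb{D}^*\times\mathbb{CP}^1$ intertwining $\mathcal{F}_1$ and $\mathcal{F}_2$.

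For the \emph{only if} direction, I would lift a given $\phi$ to $\widetilde{\phi}:\mathbb{H}\times\mathbb{CP}^1\to\mathbb{H}\times\mathbb{CP}^1$ and compose with the straightenings $\Phi_1,\Phi_2$, obtaining a biholomorphism that preserves both the projection and the horizontal foliation. Any such map must have the form $\widetilde{\phi}(z,w)=(\widetilde{f}(z),g(w))$ with $\widetilde{f}\in\mathrm{Aut}(\mathbb{H})$ covering an automorphism of $\mathbb{D}^*$ and $g\in PSL_2(\mathbb{C})$; after composing with a suitable deck transformation so that $\widetilde{f}$ commutes with the translation generator, the equivariance equation collapses to $g\circ\rho_1=\rho_2\circ g$, i.e.\ the holonomies are analytically conjugate.

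The main delicate point is the straightening step $\Phi_i$: one must show that $\widetilde{\mathcal{F}_i}$ admits a global holomorphic first integral $\mathbb{H}\times\mathbb{CP}^1\to\mathbb{CP}^1$ whose fibers are exactly the leaves. Local transversality yields a local first integral, and simple connectedness of $\mathbb{H}$ should allow unique analytic continuation across the whole cover; equivalently, one may globally solve the Riccati equation that locally defines $\mathcal{F}_i$ on $\mathbb{H}$, using that leaves stay transverse to the fibers so no singularity obstructs the continuation. The remaining steps are essentially algebraic manipulations with the deck actions.
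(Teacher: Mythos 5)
Your argument is correct and is essentially the proof the paper points to: Proposition \ref{folriccatilocbihol} is established there by citing Theorem 2 of Camacho--Lins Neto on suspensions, whose proof is exactly your universal-cover/straightening/holonomy-conjugacy scheme. The one step you flag as delicate --- globally straightening $\widetilde{\mathcal{F}}_i$ over the simply connected cover --- is justified precisely by the compactness of the fiber $\mathbb{CP}^1$ (a foliation transverse to the fibers of a bundle with compact fiber has the global path-lifting property, so the Riccati solutions do not escape and the local first integral continues along every path), which is the content of the suspension theory being cited.
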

		This is just a  modified statement of Theorem 2 \cite{CLN} p. 98. For a complete proof with the details and modifications can be found in \cite[Proposição 2.1]{NS} (see also note after Theorem 2 \cite{CLN} p. 99).

	\begin{Lemma}
		\label{compactidefibrados}
		Every suspension $S^*\times_\rho \mathbb{CP}^1$ admits a compactification $\pi: \overline{S^*\times_\rho\mathbb{CP}^1}\rightarrow S$, $\mathbb{CP}^1$-bundle over $S$, provided with a Riccati  foliation $\mathcal {F}_\rho$ with invariant fibers over the cusps with non-trivial monodromy.
	\end{Lemma}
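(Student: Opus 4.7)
The strategy is to keep the suspension over $S^*$ unchanged and glue in a Brunella local model over each cusp, using Proposition \ref{folriccatilocbihol} as the gluing tool that guarantees bundle projection and foliation agree simultaneously on the overlap. So the lemma reduces to producing the right local models and invoking the preceding proposition.

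Fix pairwise disjoint coordinate disks $\mathbb{D}_i \subset S$ centered at the cusps $p_i$ so that $\mathbb{D}_i^* := \mathbb{D}_i \setminus \{p_i\} \subset S^*$, and let $\gamma_i$ be a positive generator of $\pi_1(\mathbb{D}_i^*)$. Restricted to $\mathbb{D}_i^*$, the suspension is a Riccati bundle $(\mathbb{D}_i^*, \mathcal{F}_\rho, \pi)$ whose holonomy around $p_i$ is $\rho(\gamma_i)$. For each cusp I would pick a Brunella local model $(\mathbb{D}, \widehat{\mathcal{F}}_i, \widehat{\pi}_i)$: a Riccati foliation on $\mathbb{D} \times \mathbb{CP}^1$ extending across the origin and whose holonomy around $0$ is analytically conjugate to $\rho(\gamma_i)$. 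The standard normal forms $dy = \lambda y\, dz/z$ in the hyperbolic or elliptic case, $dy = dz/z$ in the parabolic case, and the horizontal foliation of $\mathbb{D} \times \mathbb{CP}^1$ when $\rho(\gamma_i)=\mathrm{Id}$ provide all the necessary models; in the first two cases the central fiber $\{0\}\times \mathbb{CP}^1$ is $\widehat{\mathcal{F}}_i$-invariant by inspection, while in the last case the trivial product is transverse.

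By Proposition \ref{folriccatilocbihol}, on $\mathbb{D}_i^*$ the two Riccati bundles have analytically conjugate holonomies, so there exists a biholomorphism $\phi_i: \mathbb{D}_i^* \times \mathbb{CP}^1 \to \mathbb{D}^* \times \mathbb{CP}^1$ intertwining the projections and sending leaves of $\mathcal{F}_\rho$ to leaves of $\widehat{\mathcal{F}}_i$. Using the $\phi_i$ as transition data, I attach each local model to the suspension and obtain a $\mathbb{CP}^1$-bundle $\pi: \overline{S^*\times_\rho\mathbb{CP}^1} \to S$ carrying a Riccati foliation $\mathcal{F}_\rho$ that agrees with the suspension over $S^*$; the fiber over each $p_i$ with non-trivial local monodromy is $\mathcal{F}_\rho$-invariant by the choice of local model.

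The essential step is Proposition \ref{folriccatilocbihol}, since only it guarantees that one can find a gluing map that preserves the two structures at once; without this the bundle could be extended easily but the foliation would not in general match. A minor additional check is that different representatives inside Brunella's list of normal forms produce bundle-equivalent compactifications, which again follows by applying Proposition \ref{folriccatilocbihol} to two local models sharing the same holonomy. The singularities of $\mathcal{F}_\rho$ are then confined to the invariant fibers over cusps with non-trivial local monodromy and are explicitly read off from the chosen local models.
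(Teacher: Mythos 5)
Your proposal is correct and follows essentially the same route as the paper: both restrict the suspension to punctured disks around the cusps, choose Brunella-type local normal forms ($\alpha w\,dz - z\,dw=0$, $dz - z\,dw=0$, or a model for trivial monodromy) with matching local holonomy, and invoke Proposition \ref{folriccatilocbihol} to glue them in, yielding invariant fibers exactly over the cusps with non-trivial monodromy. The only cosmetic difference is that for trivial local monodromy you take the product foliation ($m=0$), while the paper allows the more general model $mw\,dz - z\,dw=0$, $m\in\mathbb{N}$; both satisfy the statement.
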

	
	\begin{proof}There is a regular "horizontal" foliation on $S^*\times_\rho \mathbb{CP}^1$. Let $D_i$ be a disk (image of a disk of complex plan by a chart of complex structure of $S$) on $S$ around of $p_i$, we have that the foliation over $D_i\setminus \{p_i\}$ is determined by $\rho(\partial D_i)\in PSL_2(\mathbb{C})$. We choose a biholomorphism that maps $p_i$ to $0$ and $D_i$ to $\mathbb{D}$.
		
		We can choose on $\mathbb{D}\times\mathbb{CP}^1$ a singular Riccati foliation with any prescribed monodromy, where in coordinates   $(z,w)\in\mathbb{D}\times\mathbb{CP}^1$ of   fiber bundle trivialization around of invariant fiber, the foliation will be generated by a meromorphic 1-form defined in $\mathbb{D}\times\mathbb{CP}^1$ rational in the variable $w$ (or, dually generated by vector fields). For each monodromy, we will choose the following models:
		\begin{enumerate}
			\item  In the case of non-parabolic monodromy, conjugated to $w\mapsto e^{2\pi i \alpha_i}w$,  the vector field $z\dfrac{\partial}{\partial z}+\alpha_iw\dfrac{\partial}{\partial w}$ and the 1-form $\alpha_iwdz-zdw=0$, $\alpha_i\in\mathbb{C}$, or
			\item  In the case of parabolic  monodromy, conjugated to $w\mapsto w+1$, the vector field $z\dfrac{\partial}{\partial z}+\dfrac{\partial}{\partial w}$ and the 1-form $dz-zdw=0$, or
			\item In the case of trivial monodromy, conjugated to the identity, the vector field $z\dfrac{\partial}{\partial z}+mw\dfrac{\partial}{\partial w}$ and the 1-form $mwdz-zdw=0$, for some $m\in\mathbb{N}$.
		\end{enumerate}
		
		Then, locally the monodromies are the same, so by the Proposition \ref{folriccatilocbihol} the foliations over $D_i\setminus \{p_i\}$ are biholomorphic, so we can glue and obtain a singular Riccati foliation in all $\overline{S^*\times_\rho\mathbb{CP}^1}$ where over the cusps with non-trivial monodromy has invariant fibers in $\{z=0\}$ and one or two singularities. 
		
		In the non-parabolic case, the singularities of  foliation are $(0,0)$ and $(0,\infty)$ with separatrix $\{w=0\}$ and $\{w=\infty\}$, and in the parabolic case has a saddle-node singularity in $(0,\infty)$ and a separatrix $\{w=\infty\}$.
	\end{proof}
	
	\begin{remark}In that compactification, the fibers over the cusps with non-trivial monodromy are always invariant by $\mathcal{F}_\rho$. In the cusps of trivial monodromy, only in the case $m=0$ in (3) we'd have a compactification given in the neighborhood these points by product foliation without invariant fibers and singularities.
	\end{remark}
	
\subsection{Flippings and existence of holomorphic sections}

 We will show that there is a holomorphic section generically transversal to the foliation for the bundle obtained from the Lemma \ref{compactidefibrados}. This is necessary for describing the singularities of the projective structures obtained through holomorphic sections of $ \overline{S^* \times_\rho \mathbb{CP}^1} $,
	
	Recall that a $\mathbb{CP}^1$-bundle, suspension of a representation $\rho:\pi_1(S^*)\rightarrow PSL_2 (\mathbb {C})$, has an invariant holomorphic section if and only if $\rho$ has fixed points. Each fixed point determines an invariant holomorphic section transporting the fixed point through the holonomy of the foliation. Therefore, we have at most two fixed points for non-trivial representations and therefore we will have at most two invariant sections.
	
	We say that a representation $ \rho: \pi_1(S) \rightarrow PSL_2(\mathbb{C})$ is elementary if the action of $Im(\rho)$ on $\mathbb{H}^3 $ fixes one point or two in $\mathbb{H}^3\cup\partial\mathbb{H}^3 $, otherwise, we call it non-elementary. If the representation is non-elementary, the $\mathbb{CP}^1$-bundle will not have an invariant section.
	

	We will show that if the monodromy of a Riccati foliation in a $\mathbb{CP}^1$-bundle over a Riemann surface $S$ is non-trivial, then the fiber bundle has at least three holomorphic sections to guarantee the existence of at least one non-invariant. We use a result by Tsen (\cite{BPV} p. 140) that affirms that $\mathbb{CP}^1$-bundle $\overline{S^* \times_\rho \mathbb{CP}^1}$ has a holomorphic section.

	The monodromy representation of a Riccati foliation gives a complete description of the foliation  module birational isomorphisms, according to \cite[Chapter 4]{B}.
	
	When the monodromy representation is non-parabolic (including the trivial case) the foliation around an invariant fiber is $(\alpha+n)wdz-zdw=0$ and $\alpha wdz-zdw=0$, $\alpha\in \mathbb{C}$, $n\in\mathbb{Z}$, and these foliations are related to each other by flipping (or elementary transformation) of that fiber, i.e., related through a sequence of blowings up at the singular points and contractions of invariant fibers. Flipping the fiber does not change the monodromy $w\mapsto e^{2\pi i \alpha}w$ around of fiber. Similarly, when the monodromy is parabolic the foliations are $dz-zdw=0$ and $(nw + z^n)dz-zdw=0$, $n\in\mathbb{N}$, and they are related by flipping of the invariant fiber. 
	
	
	For showing that there exists infinitely  many sections in  $\mathbb{CP}^1$-bundles, it follows immediately of next result:
	
	\begin{thm}\textbf{(\cite{LM})} The composition of a finite number of flippings in a trivial bundle $S\times \mathbb{CP}^1$ gives a $\mathbb{CP}^1$-bundle over a compact Riemann surface $S$ and every $\mathbb{CP}^1$-bundle over   $S$ can be obtained of this way.
	\end{thm}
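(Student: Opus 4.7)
My plan is to handle the two directions of the equivalence separately. For the first, note that a flipping at a point $p$ in the fiber $F_s$ of a $\mathbb{CP}^1$-bundle consists of a blow-up at $p$ followed by the contraction of the strict transform $\tilde{F}_s$. Since $F_s^2=0$ on a $\mathbb{CP}^1$-bundle, we have $\tilde{F}_s^2=-1$, so Castelnuovo's contractibility criterion permits the contraction to a smooth point, and the resulting surface projects to $S$ with every fiber isomorphic to $\mathbb{CP}^1$ (the altered fiber over $s$ being the image of the exceptional divisor of the blow-up). A straightforward induction on the number of flippings shows that any finite composition starting from $S\times\mathbb{CP}^1$ yields a $\mathbb{CP}^1$-bundle over $S$.

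For the converse, I would first invoke Tsen's theorem, already used in the paper, to obtain a holomorphic section $\sigma$ of $\pi:P\to S$. This writes $P=\mathbb{P}(E)$ for a rank two vector bundle $E$; after twisting $E$ by the dual of the line subbundle corresponding to $\sigma$ (which does not alter $\mathbb{P}(E)$), we get a short exact sequence
$$0\longrightarrow\mathcal{O}_S\longrightarrow E\longrightarrow L\longrightarrow 0.$$
The key algebraic observation is that a flipping of $\mathbb{P}(E)$ at a point $p\in F_s$ amounts to replacing $E$ by the elementary modification $E'=\ker(E\to E_s/\ell_p)$, where $\ell_p\subset E_s$ is the line corresponding to $p$. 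If $p=\sigma(s)$ the effect, after tensoring with $\mathcal{O}_S(s)$, is to replace $L$ by $L(s)$ in the analogous extension; if $p\not\in\sigma(S)$, the effect is the opposite, and $L$ is replaced by $L(-s)$ with possibly modified extension class. By iterating these two moves at suitably chosen points of $S$, the degree of $L$ can be driven to zero.

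Once $\deg L=0$, it remains to show that further flippings split the extension so that $E\cong\mathcal{O}_S\oplus\mathcal{O}_S$, hence $P\cong S\times\mathbb{CP}^1$. The strategy is to produce, by flippings applied off $\sigma$, a second holomorphic section $\sigma'$ disjoint from $\sigma$; the two sections then give line subbundles of $E$ realising it as a direct sum, and reversing the entire sequence of flippings expresses $P$ as a composition of flippings of the trivial bundle. The principal difficulty is exactly this last step: the extension class varies continuously in $H^1(S,L^{-1})$, while flippings form a discrete collection of local moves, so one must argue that elementary modifications at sufficiently many and carefully chosen points of fibers can always trivialise the extension. In practice this can be carried out by using the flippings off $\sigma$ both to modify the extension class and to produce enough new sections, ultimately contradicting the indecomposability of a non-split $E$. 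An alternative presentation of the same argument is that any two $\mathbb{CP}^1$-bundles over $S$ are birational as ruled surfaces (sharing the function field $K(S)(t)$), and a birational map commuting with the ruling factors through a finite sequence of elementary transformations by the Castelnuovo-type factorisation of birational maps of ruled surfaces.
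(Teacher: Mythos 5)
The paper does not actually prove this theorem --- it is quoted from \cite{LM} and used as a black box --- so there is no internal proof to compare against; what can be judged is whether your argument is sound. Your forward direction is correct and standard: the strict transform of a fiber after one blow-up has self-intersection $-1$, Castelnuovo contracts it, the image of the exceptional curve becomes the new fiber of self-intersection $0$, and induction finishes. For the converse you offer two routes, and it is the second one --- Tsen's theorem to identify the generic fiber with $\mathbb{P}^1_{K(S)}$, hence birationality over $S$ of any two ruled surfaces, followed by the factorization of a ruling-preserving birational map into elementary transformations --- that is the classical and correct proof (it is essentially the argument in Maruyama \cite{Mar} and Nagata \cite{Na}, and the factorization theorem is proved independently by resolving indeterminacy and inducting on base points, so there is no circularity). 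Your first route, via the extension $0\to\mathcal{O}_S\to E\to L\to 0$ and driving $\deg L$ to zero, has a genuine gap at the splitting step, which you rightly flag: when $\deg L=0$ and the extension is non-split (e.g.\ the Atiyah bundle $F_2$ on an elliptic curve, the nontrivial extension of $\mathcal{O}$ by $\mathcal{O}$), the ruled surface $\mathbb{P}(E)$ has no pair of disjoint sections at all, so one cannot ``split $E$ by producing a second disjoint section'' while staying at degree zero --- any trivializing sequence of flippings must pass through odd-degree bundles, and showing that two suitably chosen elementary modifications kill the extension class requires an argument your sketch does not supply. Since you explicitly acknowledge this and the alternative route closes it, the proposal as a whole is acceptable, but if you want a self-contained proof you should develop the birational-factorization argument rather than the extension-class one.
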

	
	So we can take the images of infinitely many sections of the trivial bundle over $S$ by the composition of flippings assured by the above theorem and therefore we have at least a non-invariant section between them.
		
\subsection{Projective structures with prescribed monodromy}		
	
To prove the Theorem \ref{teoremaexistencia}, we calculate the projective charts by projecting the section along of the leaves in a fiber transversal to the foliation. This construction was already known for the case without branch points and it can be extended in a similar way when there are branch points. It can be calculated through the image of  $\sigma(S)$ by local first integral $h$ composing with the inverse of local first integral restricted to a transversal fiber $F_1$. Therefore, the local submersions that define the regular foliation restricted to the curve $\sigma(S)$ define charts of branched projective structure on $S$. The tangency points between  $\sigma(S)$ and the foliation produce the critical points of the charts.

A priori, we do not have this control at the cusps with singularities of Fuchsian-type. We can perform this, if the surface is finite-type, after the compactification of the suspension $S^* \times_\rho \mathbb{CP}^1$, the foliation becomes a singular foliation. We can only use the construction above when the section does not pass through the singular points of the foliation, because in these points there exist no local submersion. However, we can extend the construction to the points where there is a closed meromorphic 1-form that defines the  foliation locally, such as, for example, the form $\omega=\frac{dz}{z} + \lambda \frac{dw}{w}$ is a closed meromorphic and has a Liouvillian first integral $h(z,w)=zw^\lambda$.
Therefore, around a singular point the foliation $\mathcal{F}_\rho$ comes in a closed meromorphic form that has a Liouvillian first integral, and the projective chart can be calculated in the same way above.

Finally, the developing map is the local inverse of the holonomy germ  $f$ between the transversal fiber $F_1$ and the section.



In the next proposition, we will prove that the choice of the local model in the compactification, up to birational isomorphism, is related to the section of the $\mathbb{CP}^1$-bundle passes through the singular points of the foliation or not.
	
	\begin{Proposition}
		Let $\mathcal{F}$ be a Riccati foliation in $ {\mathbb {D}} \times \mathbb{CP}^1 $ and a non-trivial section $ \sigma $  and non-invariant by foliation that intersects a singularity. Then,
		\begin{myenumerate}
		
		\item The tangency order of the image of $\sigma$ by a flipping decreases 1  with the initial section with foliation, the Liouvillian first integral also changes, but the coordinate chart around the singularity of projective structure does not change.
		\item There is one only model, up to flippings, where the section does not intersect the singularities of foliation.
	\end{myenumerate}
\end{Proposition}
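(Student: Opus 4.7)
My plan is to verify both assertions by explicit computation in the local normal forms of the Riccati foliation described in the proof of Lemma \ref{compactidefibrados}. I treat the non-parabolic case in detail; the parabolic case follows by the parallel computation using the model $(nw+z^n)\,dz - z\,dw$ in place of $(\alpha+n)w\,dz - z\,dw$.

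In a trivializing fiber chart $(z,w) \in \mathbb{D}\times\mathbb{CP}^1$ with invariant fiber $\{z=0\}$ and singularities at $(0,0)$ and $(0,\infty)$, write the foliation as $\omega_n = (\alpha+n)w\,dz - z\,dw$, with Liouvillian first integral $h_n(z,w) = w/z^{\alpha+n}$. Assuming $\sigma$ passes through the singularity $(0,0)$, write $\sigma(z) = (z, f(z))$ with $f(z) = a_k z^k + O(z^{k+1})$, $a_k \neq 0$, $k \geq 1$. A direct pullback yields
\[
\sigma^*\omega_n \;=\; \bigl((\alpha+n)f(z) - z f'(z)\bigr)\,dz \;=\; (\alpha+n-k)\,a_k\, z^k\,dz + O(z^{k+1})\,dz,
\]
so the tangency order at the singularity equals $k$.

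For part (i), I perform the flipping at $(0,0)$ as a blow-up followed by contraction of the strict transform of $\{z=0\}$. In the chart $(z,w_1)$ of the blow-up defined by $w = z w_1$, the pullback of $\omega_n$ is $z\bigl[(\alpha+n-1)w_1\,dz - z\,dw_1\bigr]$; dividing by the equation of the exceptional divisor yields the new foliation $\omega_{n-1}$ with first integral $h_{n-1}(z,w_1) = w_1/z^{\alpha+n-1}$. The local expression of the first integral has thus changed, even though the substitution $w_1 = w/z$ identifies $h_{n-1}$ with $h_n$ on the open locus where both models agree. The strict transform of $\sigma$ is $w_1 = f(z)/z = a_k z^{k-1} + O(z^k)$, and the same calculation gives the new tangency order $k-1$. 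To see that the projective chart is unchanged I appeal to the construction recalled just before the Proposition: the chart on a punctured neighborhood of the cusp is the composition of a local first integral of $\mathcal{F}$ restricted to $\sigma(S)$ with the inverse of its restriction to a transversal fiber. Since the flipping is a birational isomorphism that restricts to a biholomorphism over $\mathbb{D}^*$ and sends leaves to leaves and transversal fibers to transversal fibers, this composition is manifestly preserved.

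For part (ii), iterating the procedure of part (i) at $(0,0)$ strictly decreases the tangency at each step, so after exactly $k$ flippings one reaches tangency $0$: the transformed section satisfies $w_1(0) = a_k \in \mathbb{C}^*$ and hence meets neither $(0,0)$ nor $(0,\infty)$ on the invariant fiber. Conversely, starting from this good model, a further flipping at $(0,0)$ replaces the section by $w_1/z = a_k/z + O(1)$, which passes through the new singularity at $w_1 = \infty$ on the invariant fiber; symmetrically, a flipping at $(0,\infty)$ forces the transformed section through $w_1 = 0$. Hence no nontrivial sequence of flippings at this fiber preserves the property of avoiding the singular points, and the good model is unique. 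The main technical obstacle I expect is the careful bookkeeping in the parabolic case, where the saddle-node normal form is asymmetric along the fiber (only one separatrix) so that only one direction of flipping is available; the computation of tangency and the invariance of the projective chart carry over verbatim with $\omega_n$ replaced by $(nw + z^n)\,dz - z\,dw$, but uniqueness in (ii) must be phrased in terms of that one-sided chain of flippings.
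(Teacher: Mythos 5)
Your proof is correct and follows essentially the same route as the paper: explicit computation in the Riccati normal form, realizing the flipping as a blow-up at the singular point followed by contraction of the invariant fiber, and tracking the vanishing order of the section to get the drop in tangency and the uniqueness of the transversal model. The only (harmless) variation is that you justify the invariance of the projective chart conceptually, via the fact that the elementary transformation is a bundle isomorphism over $\mathbb{D}^*$ conjugating leaves, first integrals and transversal fibers, whereas the paper simply recomputes the chart with the new first integral $z^{\alpha-1}y^{-1}$ and the new section $z^{n-1}\phi(z)$ and observes that it again equals $z^{n-\alpha}$.
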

\begin{proof}
Suppose non-parabolic case with model $\alpha wdz-zdw=0$, $\alpha\in\mathbb{C}$. We consider the case where the section passes through one of the singular points of the foliation $\sigma:\mathbb{D}\longrightarrow\mathbb{D}\times\mathbb{CP}^1$ given by $\sigma(z)=(z,\sigma_1(z))=(z,z^n\phi(z))$, $n\geq 1$ and $\phi(0)\neq 0$. We have that $F=\pi^{-1}(0)$ is a invariant fiber by the foliation whose monodromy is given by $e^{2\pi i \alpha}w$. The fixed points of the monodromy, $w=0$ and $w=\infty$, represent the separatrices of the foliation which pass through singular points $(0,0)$ and $(0,\infty)$.

Let $h(z,w)=z^\alpha w^{-1}$ be a multi-valued first integral of the foliation in a neighborhood $U$ of $(0,\sigma_1(0))$. We remark that the (multi-valued) graphs of $w=cz^\alpha$, $c\in\mathbb{C}$, are the leaves. Let us study the projection of $\sigma\cap U$ along the leaves in transversal fiber to a foliation $F_1=\pi^{-1}(z_1)$, $z_1\in\mathbb{D}^*$.

If $\sigma_1(0)=0$, it follows that $f(z)=z_1^{\alpha}\cdot \frac{\sigma_1(z)}{z^\alpha}$. As $\sigma_1(z)$ is a holomorphic germ with $\sigma_1(0)=0$, we can rewrite as $\sigma_1(z)=z^n\phi(z)$, $n\geq1$ and $\phi(0)\neq0$, up to an automorphism of $\mathbb{CP}^1$, $f(z)=(z\tilde{\phi}(z))^{n-\alpha}=(k(z))^{n-\alpha}$ where $\tilde{\phi}$ is the only holomorphic solution in the neighborhood of $0$ of the equation $\tilde{\phi}(z)^{n-\alpha}=\phi(z)$, $\phi(0)\neq 0$, and $k$ is a invertible germ. Thus, $f(k^{-1}(z))=z^{n-\alpha}$.

If $\sigma_1(0)=\infty$, 
 in the analogous way, we obtain the coordinate chart $z^{n+\alpha}$.

For the parabolic case, the foliation is induced by $\omega=dz-zdw$. We have to $F=\pi^{-1}(0)$ is the invariant fiber through the foliation whose monodromy is given by $w+1$. It has a saddle-node in the point $(0,\infty)$ whose weak separatrix is $F$ and the strong separatrix is transversal to $F$.

Let $h(z,w)=\log  z-w $ be the holomorphic first integral of the foliation in a neighborhood $U$ of $(0,\sigma_1(0))$. At the same way as above, we obtain  $f(z)=\log z_1-\log  z+\sigma_1(z)$.

We affirm that there exists diffeomorphism germ $w$ such that $\log\left(w(z)e^{\frac{1}{w(z)^n}}\right)=\log\left(ze^{-\sigma_1\left({z}\right)}\right)$ where $n$ satisfies $\sigma_1\left({z}\right)=\frac{1}{z^n}\cdot\sigma_2(z)$, $\sigma_2(0)\neq 0$. 

In fact, we put $w(z)=zh(z)$
$\log  h(z)+\frac{1}{z^nh^n}=-\frac{\sigma_2(z)}{z^n}.$ We take $F(z,\zeta)=z^n\log \zeta +\frac{1}{\zeta^n}+\sigma_2(z)$, where $F(z,h(z))\equiv0$. Since $\frac{\partial F}{\partial \zeta}(0,h(0))=-\frac{n}{h(0)^{n+1}}\neq 0$, since $\frac{1}{h(0)^n}=-\sigma_2(0)\neq0$, therefore $h(0)\neq 0$. 

By Implicit Function Theorem, there will exist $h(z)$ holomorphic in the neighborhood of $0$, thus $w(z)$ is a invertible germ ($w'(0)=h(0)\neq0$).	Thus, $dev(w^{-1}(z))=\log z+\frac{1}{z^n}$.

\begin{myenumerate}
	\item After one blowing up ($z=z$ e $w=zy$) and one contraction of the fiber $\{z=0\}$, we obtain a section $\beta(z)=(z,z^{n-1}\phi(z))$, thus decreases 1 in the tangency order of the initial section with the foliation. Already foliation after the flipping becomes $(\alpha-1)ydz-zdy=0$ with first integral $z^{\alpha-1}y^{-1}$ different of the initial. But we obtain the same projective chart equal to $z^{n-\alpha}$.

	\item The flipping given by the composition of $n$ blowings up and contractions sends the section $\sigma$ in the section $\beta(z)=(z,\phi(z))$, $\phi$ biholomorphism germ, $\phi(z)\neq0$, that is, a transversal section to the foliation and follows of i) that it's the only flipping that happens this.
\end{myenumerate}
We can use the same idea for the parabolic case.
\end{proof}

Therefore, we can choose a local model to compactify the suspension over $S^*$ such that the section is transversal to the foliation around  invariant fibers, i.e, it doesn't intersect the foliation's singularities.

	\section{Proof of the Existence Theorem}

	\begin{proof}[Proof of the Theorem \ref{teoremaexistencia}]
		At the $\mathbb{CP}^1$-bundle $ \pi: \overline {S^* \times_ \rho \mathbb{CP}^1} \rightarrow S$, by the discussion in the previous section $\pi$ has at least one non-trivial, and non-invariant holomorphic section $\sigma$.
		We will study two cases: regular points and cusps.
		
		\textbf{1st case:} Regular points.
		
		At regular points on the surface, we will obtain, up to appropriate coordinates changing, complex projective charts or branched coverings.
		
		In fact, at a regular point $p=(z_0,w_0)\in S^*\times\mathbb{CP}^1$ of a non-trivial and non-invariant section $\sigma$ we have that $\mathcal{F}_\rho \ \tpitchfork\ \pi$. We analyze two cases: whether $\sigma$ is transversal to  $\mathcal{F}_\rho$ at $p$ or not.
		
		By introducing coordinates $(z,w)\in S^*\times\mathbb{CP}^1$ centered on $p$, in a neighborhood of $p$ to foliation $\mathcal{F}_\rho$ is regular, we can think as "horizontal" foliation $\frac{\partial}{\partial z}$.
		
		Putting $\sigma(z):=(z,\sigma_1(z))$, let $U$ be a neighborhood of $(0,\sigma_1(0))$ in $\mathbb{D}\times\mathbb{CP}^1$, $h(z,w)=z$ the holomorphic first integral of $\mathcal{F}_\rho$ in $U$ and  $F_1=\pi^{-1}(z_1)$ a fiber near to $0$. The restriction of $h$ to $F_1\cap U$ is a diffeomorphism and $\left(\restr{h}{F_1\cap U}\right)^{-1}\circ h(\sigma(z))=(z_1,f(z))$, then $f(z)=\sigma_1(z)$.
		
		In this case, $f$ is holomorphic and if the foliation is transversal to the section in $(0,\sigma_1(0))$, we obtain that projective chart around to $0$ is a homeomorphism. Otherwise, $\sigma_1'(z)=0.$ We can rewrite as $\sigma_1(z)=z^n\phi(z)$, $\phi(0)\neq0$, or better, $\sigma_1(z)=(z\tilde{\phi}(z))^n=(k(z))^n$, $n-1$ is the tangency order of the section with the foliation and since $k'(0)\neq0$, so $k$ is a invertible holomorphic germ.
		
		Thus, $f(k^{-1}(z))=z^n$, i.e., ramified covering with $n$ sheets.
		
		\textbf{2nd case:} Cusps
		
		At the points $ \{p_1, \ldots, p_k \} $ we will obtain, up to appropriate coordinates changing, singular projective charts  $ z \mapsto z^\alpha $, $ \alpha \in  \mathbb{C}^* $ , if the monodromy around the point is non-parabolic and when it's parabolic it will be$ z \mapsto \log z + \frac{1}{z^n} $, $ n \in \mathbb{N} $.
		
		The foliations used in Lemma \ref{compactidefibrados} have Liouvillian first integrals. We shall separate in parabolic, non-parabolic, and trivial cases.
		
	In coordinates $(z,w)\in \mathbb{D}\times\mathbb{CP}^1$, we consider the case where the section $\sigma:\mathbb{D}\longrightarrow\mathbb{D}\times\mathbb{CP}^1$ given by $\sigma(z)=(z,\sigma_1(z))$ doesn't pass through singularities of the foliation, i.e., $\sigma_1(0)\neq 0,\infty$.
		
		\begin{myenumerate}
			\item Non-parabolic Monodromy
			
			 The foliation is induced by $\omega=\alpha wdz-zdw$. We have that $F=\pi^{-1}(0)$ is a invariant fiber by the foliation whose monodromy is given by $e^{2\pi i \alpha}w$. The fixed points of the monodromy, $w=0$ and $w=\infty$, represent the separatrices of the foliation which pass through singular points $(0,0)$ and $(0,\infty)$.

			Let $h(z,w)=z^\alpha w^{-1}$ be a multi-valued first integral defined at $\mathbb{D}\times \mathbb{CP}^1$ of the foliation in a neighborhood $U$ of $(0,\sigma_1(0))$. We remark that the (multi-valued) graphs of $w=cz^\alpha$, $c\in\mathbb{C}$, are the leaves. Let us study the projection of $\sigma\cap U$ along the leaves in transversal fiber to a foliation $F_1=\pi^{-1}(z_1)$, $z_1\in\mathbb{D}^*$.
			
			In fact, $\left(\restr{h}{F_1}\right)^{-1}\circ (h(\sigma(z))=\left(\restr{h}{F_1}\right)^{-1}\circ \left(\frac{z^{\alpha}}{\sigma_1(z)}\right)=\left(z_1,z_1^{\alpha}\cdot\frac{\sigma_1(z)}{z^\alpha}\right)$. Thus, $f(z)=z_1^{\alpha}\cdot \frac{\sigma_1(z)}{z^\alpha}$. Since $\sigma_1(z)$ is a holomorphic germ with $\sigma_1(0)\neq 0$, up to an automorphism of $\mathbb{CP}^1$, f is $\frac{\sigma_1(z)}{z^\alpha}=(z\tilde{\phi}(z))^{-\alpha}=(k(z))^{-\alpha}$. We remark that the equation $\tilde{\phi}(z)^{-\alpha}=\sigma_1(z), \ \sigma_1(0)\neq0$, admits only one solution $\tilde{\phi}(z)=e^{-\frac{1}{\alpha}\log(\sigma_1(z))}$ holomorphic in the neighborhood of $0$ with $k$ a invertible germ, since $k'(0)=\tilde{\phi}(0)\neq 0$. Thus, $f(k^{-1}(\frac{1}{z}))=z^{\alpha}$.

			\item Parabolic Monodromy
			
			
			We have that $f(z)=\log z_1-\log  z+\sigma_1(z)$, as above, with coordinates appropriate changing, we have $dev(w^{-1}(z))=\log z$.

			\item Trivial Monodromy 
			
		The foliation is induced by $\omega=mwdz-zdw$ where $m\in\mathbb{N}$. The fiber $F=\pi^{-1}(0)$ is invariantand the monodromy around it is the identity. The singular points of the foliation are $(0,0)$ and $(0,\infty)$. 
			
			In an analogous way to the previous cases, we obtain $f(z)=z^{n-m}$, when $\sigma_1(0)\neq\infty$.
			
			
			
			
		\end{myenumerate}
	\end{proof}
	
	\begin{remark}
		The case of trivial monodromy around a cusp, the charts are as in the case of the regular points where the section is not transversal to $\mathcal{F}_\rho$. In fact, if in Lemma \ref{compactidefibrados} we choose the model $dw=0$ instead of $mwdz-zdw=0$, for some $m\in\mathbb{Z}$, the foliation (also the first integral) would extend holomorphically at the cusps with trivial monodromy.
	\end{remark}
	
Let $\rho:\pi_1(S^*)\rightarrow PSL_2(\mathbb{C})$ a representation. We obtain a dictionary between a triple $(\pi,\mathcal{F}_\rho, \sigma)$, where $\pi:P\rightarrow S$ is a $\mathbb{P}^1$-bundle equipped with a Riccati foliation $\mathcal{F}_\rho$ and $\sigma$ is a holomorphic section of $\pi$ generically transversal to $\mathcal{F}_\rho$, and a singular projective structure of Fuchsian-type in the Riemann surface $S$ with monodromy $\rho$.

\section{Isomonodromic Deformations}
In this section, we study geometry and topology of the local structures around the cusps for deform continuously projective structures on surfaces of finite-type that preserve the holonomy. 

\subsection{Geometry and topology of Fuchsian-type singularities}
To answer the problem of Gallo-Kapovich-Marden about non-uniqueness of projective structure on surfaces of finite-type, singularities of Fuchsian-type have complicated behaviors and it was studied to assist in surgeries like moving branch points.

We denote by $\mathbb{D}^*$ a deleted open neighborhood of a cusp, up to biholomorphism, and $p: T\rightarrow \mathbb{D}^*$ universal covering of $\mathbb{D}^*$ where $T=\{x\in\mathbb{C}\ |\ \Re x<0\}$ with $p(x)=e^x$, where $\Re x$ represents the real part of $x$. Let $f$ be a multi-valued function, we take  $\tilde{f}$ a lifting of $f$ to universal covering.

\begin{Definition}The degree of a multi-valued map $f\colon \mathbb{D}^*\dashrightarrow\mathbb{D}^*$ is the maximum number of preimage of each $z\in\mathbb{D}^*$ by $\tilde{f}$ restricted to a fundamental domain.
\end{Definition}

We remark that for a multi-valued map $f$ of degree $1$, $\tilde{f}$ is injective on each fundamental domain.

\begin{Example}The multi-valued map  $z^{\frac{3}{2}}$ defines a projective chart around a cusp with monodromy $w\mapsto -w$ and developing map $e^{\frac{3}{2}x}$ defined in $T$. The points of $\{z\in\mathbb{D}^*\ |\ \Im\ z<0\}$, where $\Im x$ represents the imaginary part of $x$, has a preimage in the fundamental domain $T_0=\{x\in T\ |\ 0<\Im x< 2\pi\}$, while in $\{z\in\mathbb{D}^*\ |\ \Im z>0\}$ has two preimages in $T_0$, therefore this map has degree 2.
\end{Example}

Given a local non-parabolic monodromy conjugate to $ w \mapsto e^{2\pi i \alpha} w$, when $\Re \alpha \neq 0 $. The projective structure defined around one of the fixed points of this monodromy, which we will assume to be the origin, can be thought as a sector of $\mathbb{CP}^1 \setminus \{0,\infty \}$ centered on $0$ with angle opening $2 \pi \Re \alpha$ and length sides $1$ and $ e^{-2\pi \Im \alpha}$ identified by the transformation $w \mapsto e^ {2\pi i \alpha}w $.

Geometrically, two points $u+ iv, u'+iv' \in T $ have the same image by $dev$ if and only if $ (u',v') = (u,v) - \beta \mathbb{Z} $, where $ \beta = \frac {2 \pi i} {\alpha}$.

We put $\alpha=a+ib$. The semi-plane $au-bv <0$ is decomposed into biholomorphic strips to the disk minus the radius $[0,1)$ by $dev$ and this decomposition is given by  parallel lines to $av + bu = 0$ and equidistant with distance $ \frac{2 \pi}{| \alpha |} $.

\begin{figure}[h!]
	\centering
	\includegraphics[scale=0.5]{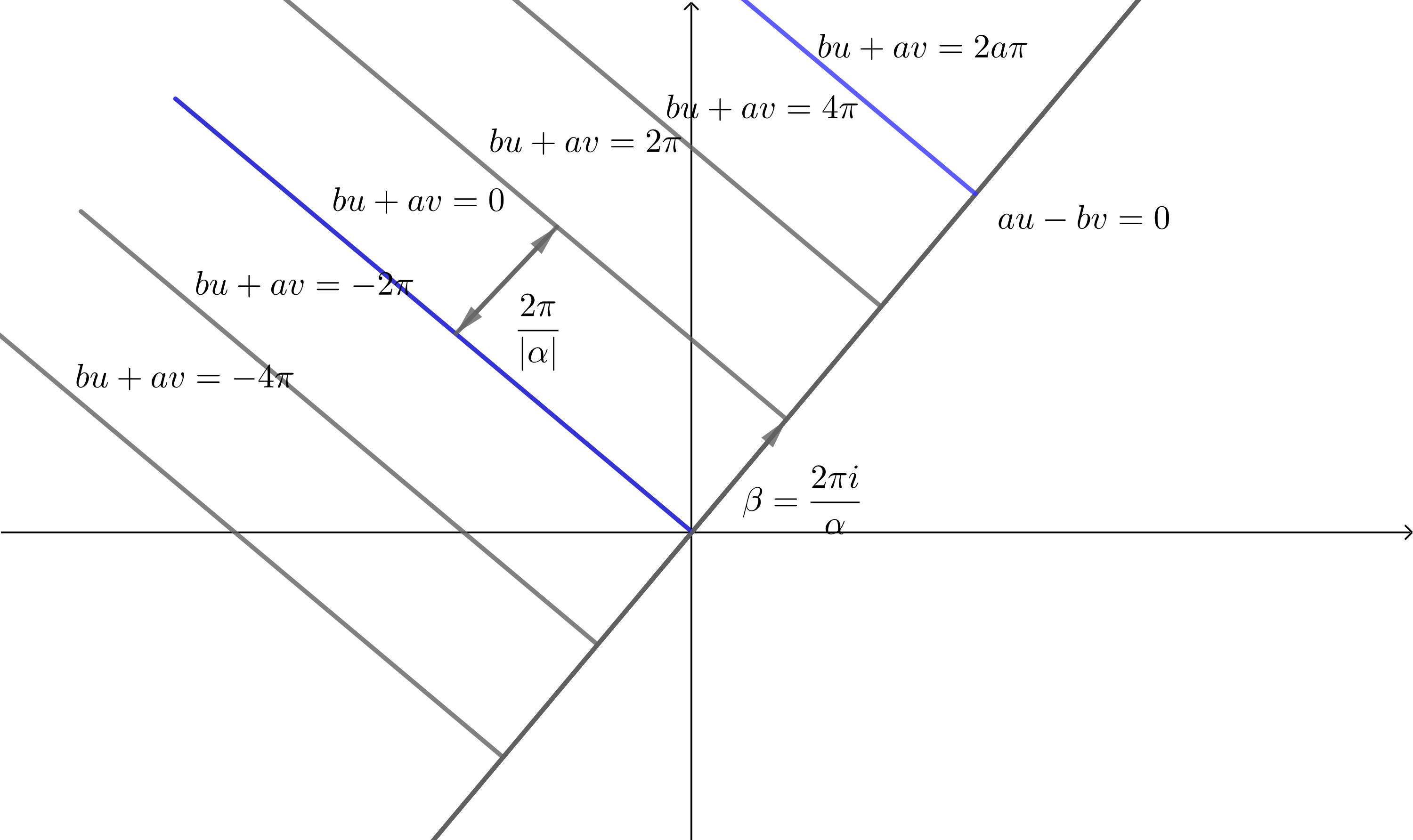}
	\caption{Decomposition of $dev(x)=e^{\alpha x}$, $\Re \alpha>0$}
	\label{geometriazalpha}
\end{figure}

We change the universal covering of $\mathbb{D} ^*$ such that the new fundamental domain is given by a strip whose boundary consists of lines $bu + av = 0$ and $ bu + av = 2 \pi|\alpha|$. Therefore, the maximum number of preimages of $z \in \mathbb{D}^*$ for $dev$ restricted to fundamental domain is $ \lceil \Re \alpha \rceil $ - degree of $ z^\alpha $.

In the case $\Re \alpha = 0$, the fundamental domain $T_0$ covers a ring $A = \{z \in \mathbb{C} \ |\ e^{- 2 \pi b} <|z| <1 \}$ through $dev(x) = e^{ibx}$. The action of $dev$ is defined by the translation $ w \mapsto w-\frac{2 \pi}{b}$ where the semi-plane $ v> 0 $ will be decomposed by the   lines parallel to $u = 0$ and equidistant with distance $\frac{2\pi} {|b|} $, see figure \ref{geometriazalpha0}. In that case, the projective structure can be seen as the ring $A$ with the boundary lines identified by the transformation $ w \mapsto e^{-2 \pi b} w$, which is topologically a torus.

\begin{figure}[h!]
	\centering
	\includegraphics[scale=.7]{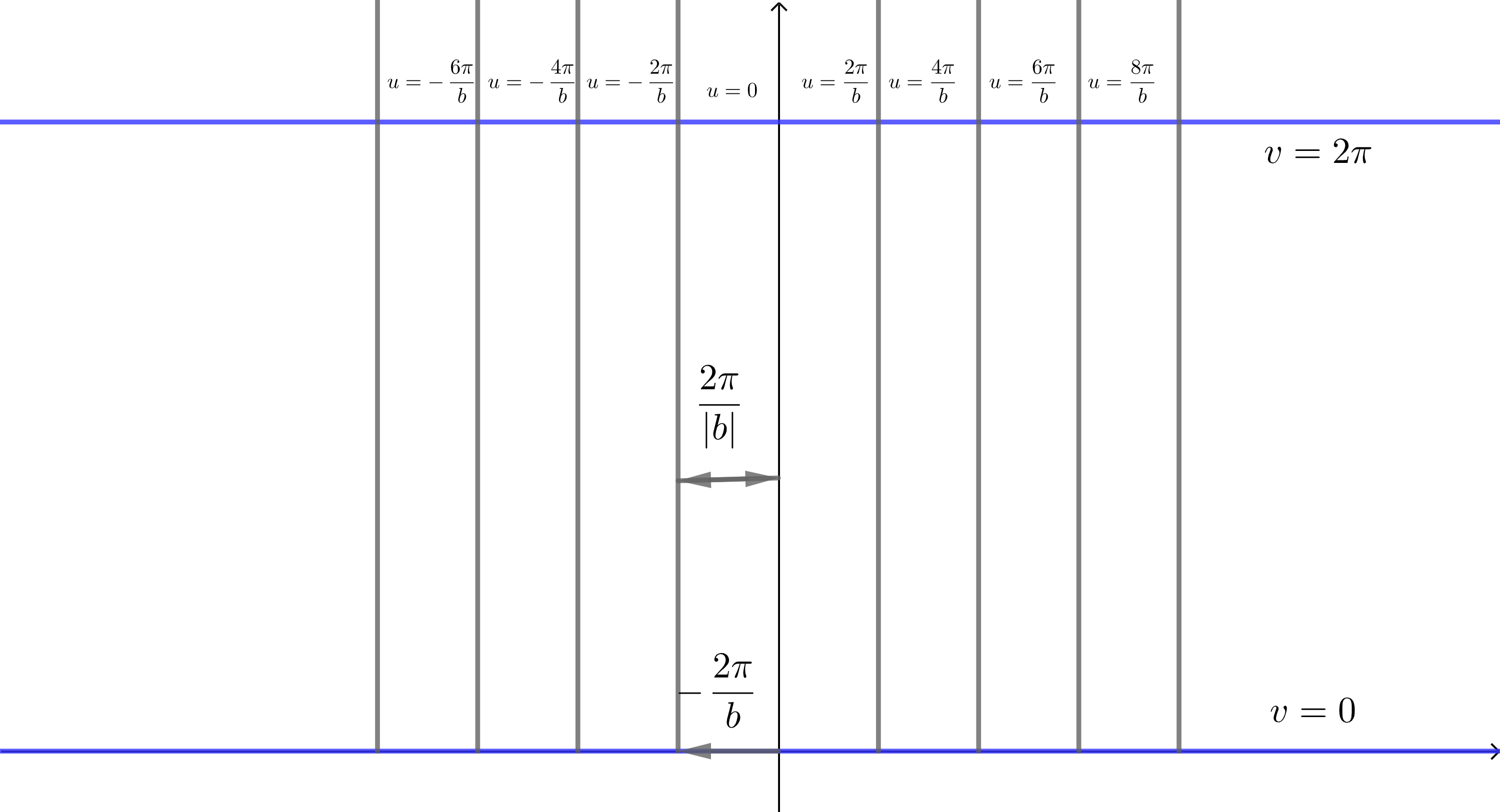}
	\caption{Decomposition of $dev(x)=e^{\alpha x}$, $\Re \alpha=0$} 
	\label{geometriazalpha0}
\end{figure}

 We prove that two actions in the universal covering of $\mathbb{D}^*$ classify projective structures of type $z^\alpha$, $\Re \alpha> 0$. This cover all charts of type $z^\alpha$, $\Re \alpha\neq 0$.

\begin{Proposition} \label{acaorecuniversal}
	The projective structure defined by the branches of $z^\alpha $, $ \Re \alpha> 0 $, in $ \mathbb {D}^*$ is represented by a pair of vectors $(2 \pi i, \frac{2\pi i} {\alpha})$, where $x \mapsto x + 2 \pi i $ and $ x \mapsto x + \frac{2 \pi i}{\alpha} $ are in $ \pi_1(\mathbb{D}^*)$ acting in T. Conversely, this pair defines the projective structure defined by branches of $z^\alpha $ in $\mathbb{D}^*$.
\end{Proposition}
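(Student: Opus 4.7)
The plan is to establish a dictionary between a projective structure on $\mathbb{D}^*$ coming from branches of $z^\alpha$ and the ordered pair of translations $(\tau_1,\tau_2)=(2\pi i,\tfrac{2\pi i}{\alpha})$ acting on the universal cover $T$. The first vector is forced by topology: the deck group of $p\colon T\to\mathbb{D}^*$, $p(x)=e^x$, is generated by $\tau_1\colon x\mapsto x+2\pi i$. So the content of the proposition is to identify the second vector with the ``period'' of the developing map, and to check that this pair determines the structure up to M\"obius equivalence.

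For the direct implication I would lift the multi-valued $z^\alpha$ through $p$ to the single-valued developing map $\widetilde{f}\colon T\to\mathbb{CP}^1$ defined by $\widetilde{f}(x)=e^{\alpha x}$ (normalized up to post-composition with a M\"obius transformation). A direct computation gives $\widetilde{f}(x+2\pi i)=e^{2\pi i\alpha}\widetilde{f}(x)$, recovering the local monodromy $w\mapsto e^{2\pi i\alpha}w$, and $\widetilde{f}(x+\tfrac{2\pi i}{\alpha})=e^{2\pi i}\widetilde{f}(x)=\widetilde{f}(x)$, so $\tau_2$ is indeed a period. Since the fiber of $\exp$ is exactly $2\pi i\mathbb{Z}$, one has $\widetilde{f}(x)=\widetilde{f}(x')$ iff $\alpha(x-x')\in 2\pi i\mathbb{Z}$, i.e.\ $x-x'\in\tfrac{2\pi i}{\alpha}\mathbb{Z}$, so $\tau_2$ generates exactly the fibers of $\widetilde{f}$. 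This matches the geometric description preceding the proposition: $\tau_1$ glues strips of $T$ into the punctured disk, while $\tau_2$ realizes the translation between two consecutive preimages of a given point of $\mathbb{CP}^1$ under $\widetilde{f}$.

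For the converse, start with the pair $(\tau_1,\tau_2)$ acting on $T$. The quotient $T/\tau_1\mathbb{Z}$ recovers $\mathbb{D}^*$ via $\exp$, so the underlying surface is fixed. To recover the projective chart, seek a holomorphic $g\colon T\to\mathbb{CP}^1$ which is $\tau_2$-invariant and whose $\tau_1$-equivariance produces monodromy $e^{2\pi i\alpha}$. The quotient $T/\tau_2\mathbb{Z}$ is biholomorphic to a punctured disk via $y\mapsto e^{\alpha y}$, so any $\tau_2$-invariant holomorphic map from $T$ descends to a holomorphic map from this quotient to $\mathbb{CP}^1$. Among these, the prescribed $\tau_1$-monodromy $w\mapsto e^{2\pi i\alpha}w$ forces $g(x)=c\cdot e^{\alpha x}$ for some $c\in\mathbb{C}^*$, and the M\"obius transformation $w\mapsto w/c$ then returns the standard developing map for $z^\alpha$.

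The main obstacle lies in the converse, specifically in the uniqueness up to M\"obius equivalence: one must rule out that another holomorphic $\tau_2$-invariant map with the correct $\tau_1$-monodromy exists which is not projectively equivalent to $e^{\alpha x}$. I would handle this by expanding such a $g$ on the quotient punctured disk as a Laurent series and using the monodromy constraint together with the action of $\tau_2$ to eliminate all harmonics other than the leading exponential, obtaining the stated rigidity.
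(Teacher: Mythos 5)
Your direct implication coincides with the paper's: the paper simply refers back to the geometric discussion preceding the proposition, which is exactly your computation that $e^{\alpha x}$ intertwines $x\mapsto x+2\pi i$ with $w\mapsto e^{2\pi i\alpha}w$ and has fibres the orbits of $x\mapsto x+\tfrac{2\pi i}{\alpha}$. The divergence is in the converse. The paper's converse is much lighter than yours: it only checks that the pair is \emph{realized} by the standard structure, by observing that the required biholomorphism $\tilde{\phi}$ of $T$ commuting with $x\mapsto x+2\pi i$ and satisfying $dev=e^{\alpha x}\circ\tilde{\phi}$ can be taken to be the identity. You instead attempt a genuine rigidity statement (any developing map compatible with the pair is M\"obius-equivalent to $e^{\alpha x}$), which is stronger than what the paper proves.

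There are two concrete gaps in your uniqueness sketch. First, $T=\{\Re x<0\}$ is not preserved by translation by $\tau_2=\tfrac{2\pi i}{\alpha}$ unless $\alpha\in\mathbb{R}$ (one has $\Re\tau_2=2\pi\Im\alpha/|\alpha|^2$), so ``the quotient $T/\tau_2\mathbb{Z}$'' and a Laurent expansion on it are not well defined as stated; the $\tau_2$-invariant half-plane is $\{\Re(\alpha x)<0\}$, which is the one the paper actually decomposes into strips in Figure \ref{geometriazalpha}. Second, writing $g(x)=e^{\alpha x}h(x)$, the constraints you impose only force $h$ to be periodic under both $2\pi i$ and $\tau_2$: when these are $\mathbb{R}$-linearly independent this makes $h$ doubly periodic but not necessarily constant (a nonconstant elliptic $h$ is still a meromorphic map to $\mathbb{CP}^1$), and when $\alpha=p/q\in\mathbb{Q}$ the group they generate is the lattice $\tfrac{2\pi i}{q}\mathbb{Z}$, so $h$ may be any function of $e^{qx}$. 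Ruling these out requires invoking that $g$ is a local homeomorphism whose fibres are \emph{exactly} the $\tau_2$-orbits; your harmonic-elimination step never uses this, and without it the claimed forcing of $g(x)=c\,e^{\alpha x}$ fails. If you only want what the paper asserts, it suffices to exhibit $e^{\alpha x}$ as a developing map realizing the pair, as in your first paragraph.
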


\begin{proof}The first assumption follows by the discussion above. Conversely, given the pair $(2 \pi i, \frac{2 \pi i}{\alpha})$, we establish that $ 2 \pi i $ is the vector of $\pi_1(\mathbb {D}^*)$ action in T and $\frac{2 \pi i}{\alpha}$ is the vector of equivalence action of $dev$ by the monodromy representation.
	
	This pair is associated with the structure coming from the branches of $z^\alpha $, if we show that there is a biholomorphism $ \tilde{\phi} : T \rightarrow T $ such that $ \tilde{\phi} (t +2 \pi i) = \tilde{\phi}(t) +2 \pi i $ and $dev = e^{\alpha x} \circ \tilde{\phi}$ but, if we put $\tilde{\phi} = id $ and the result follows.
\end{proof}

We recall that translation structure on a surface is defined as an atlas such that the coordinate changes are translations. The branches of $\log z $ and $\log z + \frac{1}{z}$ define different translation structures in $\mathbb{D}^*$, for example. We will show that these structures and their pull-backs by covering maps of degree $\geq 2$ provide us with a list of translation structures in $\mathbb{D}^*$ modulo projective equivalence.

\begin{Proposition} \label{recobrimentolog}
	Translation structures in $\mathbb{D}^*$ induced by the \textit{pull-back} of $\log z$ by the map $z \mapsto z^n $, $n \in \mathbb{N}$ , $n \geq 2$, are projectively  equivalent to those induced by $ \log z$.
\end{Proposition}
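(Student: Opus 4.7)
The plan is to work on the universal cover $p \colon T \to \mathbb{D}^*$, $p(x) = e^x$, set up just before the proposition, and verify that the developing maps of the two projective structures on $\mathbb{D}^*$ differ by a single element of $PSL_2(\mathbb{C})$.

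First I would identify the developing map of the translation structure induced by $\log z$ as the lift $f_1 \colon T \to \mathbb{C}$, $f_1(x) = x$, whose deck monodromy is the translation $w \mapsto w + 2\pi i$. Then I would describe the pull-back by $\pi_n \colon \mathbb{D}^* \to \mathbb{D}^*$, $z \mapsto z^n$: this covering of degree $n$ lifts to $\widetilde{\pi}_n \colon T \to T$, $\widetilde{\pi}_n(x) = nx$, since $e^{nx} = (e^x)^n$. By definition the pull-back structure has developing map $f_2 = f_1 \circ \widetilde{\pi}_n$, i.e.\ $f_2(x) = nx$, with monodromy $w \mapsto w + 2\pi i n$ (the $n$-th power of the original).

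The key observation is then that $f_2 = M \circ f_1$ where $M \in PSL_2(\mathbb{C})$ is the affine homography $M(w) = nw$. Since the paper defines projective equivalence of structures on a fixed surface as the developing maps differing by a homography, this yields the claimed equivalence; as a consistency check, conjugation by $M$ sends $w \mapsto w + 2\pi i$ to $w \mapsto w + 2\pi i n$, matching the monodromy of $f_2$ computed independently. The only "obstacle" is really terminological: one must keep straight that projective equivalence here requires only post-composition of the developing map by an element of $PSL_2(\mathbb{C})$, and does not involve any biholomorphic reparametrization of the source $\mathbb{D}^*$ (which would instead be needed for equivalence as translation structures, and in general fails for $n\geq 2$). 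After that clarification the proposition reduces to the fact that $n\log z$ and $\log z$ differ by the affine M\"obius transformation $w \mapsto nw$.
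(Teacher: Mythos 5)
Your argument is correct: lifting to the universal cover $p(x)=e^x$, the pull-back structure has developing map $nx$, which is the post-composition of the original developing map $x$ with the homography $w\mapsto nw$, and this is exactly the paper's notion of projective equivalence (the paper itself states this proposition without proof, but the computation $\log(z^n)=n\log z$ is clearly the intended one). Your closing remark that the two are \emph{not} equivalent as translation structures, only as projective structures, is a worthwhile clarification and consistent with the paper's phrasing ``modulo projective equivalence.''
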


The translation structure defined by $\log z$ can be seen as an infinite cylinder with one end.

We know that a branch of  $\log z $ is injective in its domain, the same occurs with $\log z + \frac{1}{z}$. We use some ideas from Section 2.2 of \cite{Bo} where local models of poles of meromorphic forms that induce translation structures on compact Riemann surfaces were studied.

Let $U_R=\{z\in\mathbb{C}\mid |z|>R\}$ and $V_R$ be the Riemann surface obtained after removing from $U_R$ the $\pi$-neighborhood of the real half-line $\mathbb{R}^{-}$, and identifying the lines $-i\pi+\mathbb{R}^{-}$ and $i\pi+\mathbb{R}^{-}$ by the translation  $z\mapsto z+2\pi i$.

We choose the usual determination of $\log z$ in $\mathbb{C}\setminus\mathbb{R}^{-}$ restricted to $U_{R'}$, we obtain the map $z\mapsto z+\log z$ well-defined from $U_{R'}\setminus\mathbb{R}^{-}$ to $\mathbb{C}$.

\begin{Proposition}\label{holoinjetiva}
	The map $z+\log z$ extends to a injective holomorphic map $f:U_{R'}\rightarrow V_R$, if $R'$ is large enough.
\end{Proposition}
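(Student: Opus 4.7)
The plan is to pass to the universal cover, exploit the equivariance of $\tilde f(\zeta) = e^\zeta + \zeta$ under $\zeta \mapsto \zeta + 2\pi i$ so that it descends compatibly with the gluing defining $V_R$, and then deduce injectivity from the fact that $f$ is a small perturbation of the identity at infinity.

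First I would take $\widetilde U_{R'} := \{\zeta \in \mathbb{C} : \Re \zeta > \log R'\}$ as the universal cover of $U_{R'}$ via $\zeta \mapsto e^\zeta$, with deck group generated by $\tau : \zeta \mapsto \zeta + 2\pi i$. The lift of $z + \log z$ is the single-valued holomorphic map $\tilde f(\zeta) = e^\zeta + \zeta$, and it satisfies the equivariance $\tilde f(\tau(\zeta)) = \tilde f(\zeta) + 2\pi i$. Since $V_R$ is defined exactly by identifying the boundary half-lines $\pm i\pi + \mathbb{R}^-$ via the translation $w \mapsto w + 2\pi i$, the equivariance lets $\tilde f$ descend to a well-defined holomorphic $f : U_{R'} \to V_R$, provided the image lies in $V_R$. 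Concretely, writing $\zeta = u + iv$, one has $\Re\tilde f(\zeta) = e^u \cos v + u$ and $\Im \tilde f(\zeta) = e^u \sin v + v$. I would verify that for $u > \log R'$ with $R'$ large enough, $\tilde f(\zeta)$ does not lie in a translate by $2\pi i \mathbb{Z}$ of the removed strip $\{w : \Re w < 0,\ |\Im w| < \pi\}$: the condition $\Re \tilde f < 0$ forces $\cos v < 0$, which in turn makes $|\sin v|$ bounded away from zero outside a neighbourhood of the translated lines $v \in \pi + 2\pi \mathbb{Z}$, and on those lines $\tilde f$ lands on the identified boundary of the strip rather than in its interior.

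Second, I would prove injectivity by reducing to injectivity of $\tilde f$ on $\widetilde U_{R'}$: if $f(z_1) = f(z_2)$ in $V_R$, lifting and using the equivariance to absorb the deck element reduces the equality to $\tilde f(\zeta_1) = \tilde f(\zeta_2)$, i.e., $e^{\zeta_1} - e^{\zeta_2} = \zeta_2 - \zeta_1$. When $|\zeta_1 - \zeta_2|$ is small, a mean-value estimate gives $|e^{\zeta_1} - e^{\zeta_2}| \geq \tfrac{1}{2} e^{\min \Re \zeta_j} |\zeta_1 - \zeta_2|$ for $R'$ sufficiently large, which contradicts the equality unless $\zeta_1 = \zeta_2$. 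When $|\zeta_1 - \zeta_2|$ is large, factoring $e^{\zeta_1} - e^{\zeta_2} = e^{\zeta_2}(e^{\zeta_1 - \zeta_2} - 1)$ and noting that $|e^{\zeta_1 - \zeta_2} - 1|$ is bounded below away from zero once $\zeta_1 - \zeta_2$ is bounded away from $2\pi i \mathbb{Z}$, while near such translates $|\zeta_1 - \zeta_2|$ is itself large, again forces the equation to fail.

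The main obstacle I expect is the image control in the extension step: precisely in the region where $\log z$ is most sensitive (near $\mathbb{R}^-$) one must verify that the image of $\tilde f$ avoids each horizontal translate of the removed strip and lands exactly on the identified boundaries. This forces an explicit choice of $R'$ in terms of $R$ and underlies the hypothesis ``if $R'$ is large enough''; once that is settled, the close-to-identity structure of $f$ makes the injectivity estimates fairly mechanical.
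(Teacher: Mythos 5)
The paper offers no proof of its own here (it simply cites Boissy and the author's thesis), so I am judging your argument on its own terms. Your extension step is essentially correct: writing $\tilde f(u+iv)=e^{u}\cos v+u+i(e^{u}\sin v+v)$, the condition $\Re\tilde f<0$ forces $v$ close to $\pi$ modulo $2\pi$, and there $|\Im\tilde f \mp\pi|=|e^{u}\sin\delta-\delta|\ge 0$ with the right sign once $e^{u}\ge \pi/2$ (using $\sin\delta\ge 2\delta/\pi$), so the image avoids the open removed strip and meets its boundary only where the gluing acts; together with the equivariance under $\zeta\mapsto\zeta+2\pi i$ this part goes through.

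The injectivity step has a genuine gap: you reduce the claim to injectivity of $\tilde f(\zeta)=e^{\zeta}+\zeta$ on the entire half-plane $\{\Re\zeta>\log R'\}$, and that statement is false for every $R'$. Indeed, pick $r>R'$ and $\epsilon\in(0,\pi/2)$ solving $r\sin\epsilon=\pi+\epsilon$ (so $\epsilon\approx\pi/r$), and set $\zeta_{\pm}=\log r\pm i(\pi+\epsilon)$. Then $\tilde f(\zeta_{-})=\overline{\tilde f(\zeta_{+})}$ and $\Im\tilde f(\zeta_{+})=-r\sin\epsilon+\pi+\epsilon=0$, so $\tilde f(\zeta_{+})=\tilde f(\zeta_{-})$ although $\zeta_{+}\ne\zeta_{-}$. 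This pair falls exactly into the crack of your case analysis: $\zeta_{+}-\zeta_{-}=2i(\pi+\epsilon)$ lies within $2\pi/r$ of $2\pi i$, yet $|\zeta_{+}-\zeta_{-}|\approx 2\pi$ is \emph{not} large, contradicting your assertion that near a nonzero translate of $2\pi i\mathbb{Z}$ the quantity $|\zeta_1-\zeta_2|$ must be large; here $|e^{\zeta_{+}-\zeta_{-}}-1|\approx 2\pi/r$ exactly cancels the factor $|e^{\zeta_{-}}|=r$. The proposition itself survives because the principal lifts of the corresponding points $z_{\pm}=re^{\pm i(\pi+\epsilon)}$ are $\zeta_{+}-2\pi i$ and $\zeta_{-}+2\pi i$, so $f(z_{+})$ and $f(z_{-})$ differ by $4\pi i$ and are \emph{not} identified in $V_R$ (the gluing only identifies points differing by exactly $2\pi i$ on the two boundary half-lines). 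In other words, the injectivity of $f$ genuinely uses the restriction to the fundamental strip $|\Im\zeta|\le\pi$ and the precise partial identification defining $V_R$, and your reduction discards that information. A repair must treat the regime $\Im\zeta_1\to\pi^{-}$, $\Im\zeta_2\to(-\pi)^{+}$ separately --- for instance by noting that there $\Im\tilde f(\zeta_1)\ge\pi$ while $\Im\tilde f(\zeta_2)\le-\pi$, so the images cannot coincide --- after which estimates of the kind you propose handle the remaining cases.
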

\begin{proof}
	See \cite[Section 2.2]{Bo} or \cite[Proposição 4.3]{NS} for more details.
\end{proof}

We conclude that $ \log z + \frac{1}{z}$  is also injective when restricted to a deleted neighborhood of the origin. It remains to show that $f$ is surjective in a neighborhood of infinity and to conclude through Proposition \ref{holoinjetiva} that this chart, defined in the neighborhood of origin, is topologically $ V_R $. In fact, we have

\begin{Proposition}The map $f:U_{R'}\rightarrow V_R$ is surjective in a neighborhood of  infinity, i.e., for $Z\in V_R$ with large enough modulus, there exists $z\in U_{R'}$ such that $f(z)=Z$.
\end{Proposition}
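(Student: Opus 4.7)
The aim is to produce, for every $Z\in V_R$ with $|Z|$ large, some $z\in U_{R'}$ satisfying $z+\log z=Z$. Since $f(z)=z+\log z$ is a small perturbation of the identity when $|z|\gg 1$, a Rouch\'e-type argument is natural: $f-\mathrm{id}=\log$ is of logarithmic size, and so is dominated by the linear part on a suitably large disk.

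First I would set up the equation on a fixed sheet. Recall that $V_R$ is built from $U_R$ by removing the $\pi$-neighborhood of $\mathbb{R}^-$ and gluing the two resulting boundary lines $\pm i\pi+\mathbb{R}^-$ via $w\mapsto w+2\pi i$. Any two determinations of $\log z$ differ by an integer multiple of $2\pi i$, which is precisely the identification defining $V_R$, so any solution of the equation $w+\log w = Z$ computed on a chosen branch yields a well-defined preimage of $Z$ in $V_R$.

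Given $Z$ in a fundamental domain of $V_R$ with $|Z|$ sufficiently large, I would pick a ray from $0$ avoiding the disk $D=\{w:|w-Z|\le 2\log|Z|\}$ (this is always possible because $|Z|>2\log|Z|$ for large $|Z|$, so $0\notin D$) and work with the branch of $\log$ cut along that ray. Setting $F(w)=w+\log w-Z$ and $g(w)=w-Z$ on $D$, one has $|g|=2\log|Z|$ on $\partial D$ while
\[
|F(w)-g(w)|=|\log w|\le \log|w|+\pi\le \log(|Z|+2\log|Z|)+\pi < 2\log|Z|
\]
for $|Z|$ large enough. Rouch\'e's theorem then yields that $F$ has exactly one zero in $D$, producing $z\in D$ with $f(z)=Z$.

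The main delicacy is purely bookkeeping: ensuring the branch cut of $\log$ avoids $D$ and that the resulting $z$ lies in $U_{R'}$. The first is handled by the choice of ray; for the second, the estimate $|z|\ge |Z|-2\log|Z|>R'$ holds for $|Z|$ large. Combined with Proposition~\ref{holoinjetiva}, this shows that $f$ is a biholomorphism between appropriate neighborhoods of infinity in $U_{R'}$ and $V_R$, which is the geometric content behind the topological description of the chart $\log z+\frac{1}{z}$ announced earlier.
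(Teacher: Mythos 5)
Your argument is correct and, importantly, self-contained: the paper gives no proof of this proposition at all, deferring entirely to \cite{Bo} and \cite{NS}, so your Rouch\'e perturbation argument is a genuine replacement rather than a restatement. The quantitative core is sound: on $\partial D$ with $D=\{w:|w-Z|\le 2\log|Z|\}$ one has $|\log w|\le \log(|Z|+2\log|Z|)+\pi<2\log|Z|$ once $\log|Z|>\pi+1$, so Rouch\'e applied to $F(w)=w+\log w-Z$ against $w-Z$ yields a unique zero $z$ with $|z|\ge |Z|-2\log|Z|>R'$; together with Proposition~\ref{holoinjetiva} this gives the biholomorphism onto a neighborhood of infinity that Proposition~\ref{recobrimentologz+} relies on. The one step you treat too quickly is the claim that a solution on \emph{any} branch of $\log$ automatically gives a preimage of $Z$ in $V_R$: the identification defining $V_R$ acts only along the two half-lines $\pm i\pi+\mathbb{R}^-$, not as a global quotient by $2\pi i\mathbb{Z}$, so a priori your $z$ might satisfy $f(z)=Z\pm 2\pi i$ with $Z\pm 2\pi i$ a different (or forbidden) point. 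This does not in fact occur, but it deserves a sentence: your branch and the determination defining $f$ can only disagree when $z$ lies on the opposite side of the cut $\mathbb{R}^-$, and since $\Im f(z)=\Im z+\arg z$ with $z=Z-\log Z+O(\log|Z|/|Z|)$, one checks that for $Z$ in the interior of the fundamental domain of $V_R$ the solution never crosses the cut, while for $Z$ on a boundary half-line the resulting $2\pi i$ shift is exactly the identification. With that remark added, the proof is complete.
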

\begin{proof}
		See \cite[Section 2.2]{Bo} or \cite[Proposição 4.4]{NS} for more details.
\end{proof}

\begin{Proposition}\label{recobrimentologz+}
	The projective structure on $\mathbb{D}^*$ given by the \textit{pull-back} of $\log z+\frac{1}{z}$ by $z\mapsto z^n$ is projectively equivalent to $\log z+\frac{1}{z^n}$.
\end{Proposition}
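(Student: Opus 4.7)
The plan is to follow precisely the strategy used in the parabolic case of the proof of Theorem~\ref{teoremaexistencia}: compute the pull-back explicitly, absorb an overall multiplicative constant by a Möbius transformation on the image side, and then produce a local biholomorphism of the source that matches the result with $\log z+\frac{1}{z^n}$, invoking the Implicit Function Theorem.

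Concretely, the pull-back of $\log z+\frac{1}{z}$ by the branched covering $z\mapsto z^n$ has developing map $n\log z+\frac{1}{z^n}$. Post-composing with the Möbius transformation $\phi(u)=u/n\in PSL_2(\mathbb{C})$ gives the projectively equivalent chart $\log z+\frac{1}{nz^n}$. It then suffices to find a germ of biholomorphism $\psi\colon(\mathbb{D}^*,0)\to(\mathbb{D}^*,0)$ of the form $\psi(z)=z\,h(z)$, with $h(0)\neq 0$, such that
\[
\log \psi(z)+\frac{1}{\psi(z)^n}=\log z+\frac{1}{nz^n},
\]
which, after cancelling $\log z$ and multiplying by $z^n$, reduces to solving
\[
z^n\log h(z)+\frac{1}{h(z)^n}=\frac{1}{n}.
\]

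To solve this I would set $F(z,\zeta):=z^n\log\zeta+\zeta^{-n}-1/n$ and fix the initial value $h(0)=n^{1/n}$, so that $F(0,n^{1/n})=0$. The partial derivative $\frac{\partial F}{\partial\zeta}(0,n^{1/n})=-n\cdot n^{-(n+1)/n}$ is nonzero, and hence the Implicit Function Theorem produces a holomorphic $h$ in a neighborhood of $0$ with $h(0)\neq 0$. This makes $\psi(z)=z\,h(z)$ a genuine biholomorphism germ of $(\mathbb{D}^*,0)$, establishing the projective equivalence.

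The only conceivable obstacle is the usual one encountered in the analogous calculation in Theorem~\ref{teoremaexistencia}: one has to pick a branch of $n^{1/n}$ and ensure that $\log\zeta$ is evaluated near a nonzero value so that it is unambiguously defined and holomorphic in a neighborhood of the initial point. Both points are immediate from $h(0)\neq 0$, so the scheme above carries through without further difficulty and gives the desired projective equivalence.
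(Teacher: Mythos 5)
Your computation is correct and is essentially the paper's own method: the pull-back is $n\log z+\tfrac{1}{z^n}$, the homography $u\mapsto u/n$ reduces it to $\log z+\tfrac{1}{nz^n}$, and the normal form $\log w+\tfrac{1}{w^n}$ is then reached by exactly the Implicit Function Theorem argument with $F(z,\zeta)=z^n\log\zeta+\zeta^{-n}-\tfrac{1}{n}$ that the paper uses for the parabolic case of the flipping proposition (there with $\sigma_2$ in place of the constant $-\tfrac{1}{n}$). The paper defers the details of this proposition to the references, but your argument fills them in correctly along the same lines.
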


Then  $\log z+\frac{1}{z^n}$, in terms of projective structure, is a suitable rotating and rescaling covering of $V_R$ of order $n$.

\subsection{Generalization of surgery}
Moving branch points is a surgery that consists of deformation of branched local projective charts; it can be thought as a configuration analogous to Schiffer's variations in Riemann surface's theory, as \cite{N}. These movements were introduced by Tan in \cite{T} for projective structures with simple branch points and then generalized in \cite{CDF} for higher- order branch points. Schiffer variations, in particular the moving branch points, produce deformations of the projective structure without changing the monodromy representation but, in general, do not preserve the underlying complex structure.

Let $S$ be a closed Riemann surface with a singular projective structure of Fuchsian-type with developing map $dev$. Let $ p $ be a singularity of Fuchsian-type. 

\begin{Definition}
	We define a pair of \textit{twins embedding} in $S$ as a pair of embedded curves $\gamma = \{\gamma_1, \gamma_2 \}$ starting from $p$ such that there is a determination of developing map around $ \gamma_1 \cup \gamma_2 $ which maps $\gamma_1 $, $\gamma_2 $ into a
	  simple curve $\hat{\gamma}\subset\mathbb {CP}^1$.
	\end{Definition}

According to the study of degree of $dev(x) =e^{\alpha x}$  done in the previous section, we have that there are twin curves for $ \Re \alpha>1$. In the case of parabolic monodromy, we saw in the previous section that $\log z$ and $\log z + \frac{1}{z} $ are injective in $\mathbb{D}^*$, so they don't have twins. For $ n \geq 2 $, $\log z + \frac{1}{z^n}$ can be seen as a branched covering of order $n$ of $\log z + \frac{1}{z} $, as it was done in  Proposition \ref{recobrimentologz+} and then the preimage of a segment will have $n$ copies in $\mathbb{D}^*$ for $\log z+\frac {1}{z^n}$ are the candidate twins in this model.

We'll describe the moving branch points as it was done in \cite{CDF}. Let $p$ be a branch point of $S$, we take twin curves $\gamma_1$, $\gamma_2 $ starting from $ p $ with endpoints $q_1$ and $ q_2 $. We denote by $ \alpha $ and $ \beta $ angles in $ p $, and  $ \theta_i $ the angles in $ q_i $, $ i = 1,2 $, where $ \theta_i = 2 \pi $ , if $ q_i $ is a regular point. A new branched projective structure in $S$ will be obtained by cutting $S$ along $ \gamma_1 \cup \gamma_2$ and pasting the copies according to the identifications made in Figure \ref{movpontoramificacao}.

\begin{figure}[h!]
	\begin{tikzpicture}
		\draw (-1,2)node{$\ast$}node[below]{$q_1$} circle(0.18)node[above left=1pt]{$\theta_1$}--(1,2)node[midway,below]{$\gamma_1$}node{$\circ$}node[below right=2pt]{$p$} circle(0.18)node[above=2pt]{$\alpha$}node[below =2pt]{$\beta$}--(3,2)node[midway,below]{$\gamma_2$}node{$\ast$}node[below]{$q_2$}circle(0.18)node[above right=1pt]{$\theta_2$};
		
		\draw[->](4,2)--(5,2)node[midway,above]{cut};
		
		\draw (7.5,0.3)node{$\circ$}node[below]{$p_1$}--++(1.8,1.8)node[midway,sloped]{$||$}node{$\ast$}node[right]{$q_2$}--++(-1.8,1.8)node[midway,sloped]{$|$}node{$\circ$}node[above]{$p_2$}--++(-1.8,-1.8)node[midway,sloped]{$|$}node{$\ast$}node[left]{$q_1$}--cycle node[midway,sloped]{$||$};
		
		\draw[->](10,2)--(11,2)node[midway,above]{paste};
		
		\draw (12,0)node{$\circ$}node[below]{$p_1$}circle(0.18)node[left=2pt]{$\beta$}--(12,2)node[midway,right]{$\gamma_2'$}node{$\ast$}node[right=2pt]{q}circle(0.18)node[below left=2pt]{$\theta_1+\theta_2$}--(12,4)node[midway,right]{$\gamma_1'$}node{$\circ$}node[above]{$p_2$}circle(0.18)node[left=2pt]{$\alpha$};
		
	\end{tikzpicture}
	\caption{Moving branch points}
	\label{movpontoramificacao}
\end{figure}
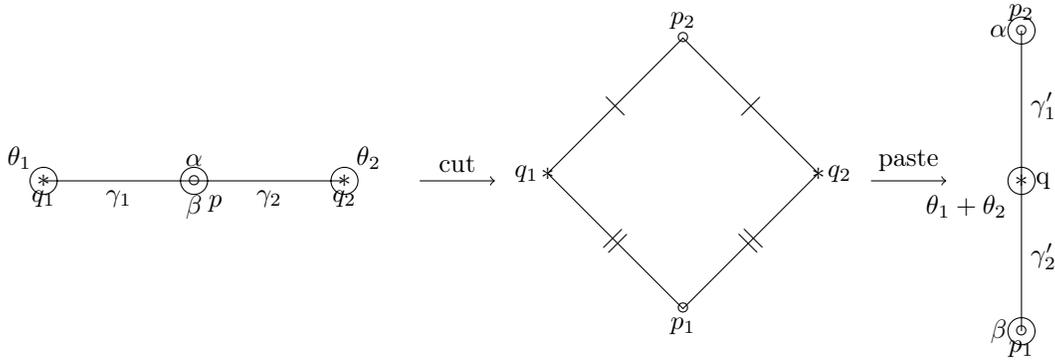

After this process, we obtain two new twin curves $ \gamma'_1 $ and $ \gamma'_2 $ starting from a point $q$ with total angle $ \theta_1 + \theta_2 $ and the endpoints $p_1$ and $p_2$ of the new twins have angles $ \beta $ and $ \alpha $, respectively. Note that the image of $ \gamma'_1 $ and $ \gamma'_2 $ by the developing map is the same as $ \gamma_1 $ and $ \gamma_2 $, that is, the surgery does not change the image by dev, then we have a pair of twins embedded into the new structure, and we will return to the initial structure if we move the points along of that pair.

Note that the angles $\alpha$, $\theta_i$ and $ \beta$ are multiples of $2 \pi $ and if $p$ is a single branch point, then $\alpha = \beta = 2\pi $. This process describes locally a continuous deformation in the space of classes of branched projective structures over $ S $ with monodromy representation fixed. In addition to being used to collapse branch points, the process can be used to change the position of branch points and to separate a higher-order branch point into several branch points of lower order.

Calsamiglia, Deroin, and Francaviglia in \cite{CDF} prove that two-branched projective structures in compact surfaces with the same quasi-Fuchsian holonomy and the same degree of branching are related by a movement of branch points, so it is possible to use this surgery to show the non-uniqueness of projective structures with the same monodromy representation.

A cone-angle $\theta$ is produced from a sector of angle $\theta$ by identification of their boundaries by an isometry. Then the singularities with cone-angle, called conical singularities, still have the same notion of angle as they have in the case of branch points. So the surgery to move branch points will work in the same way. 

In \cite{Tr}, Troyanov characterized orientable compact surfaces with conical singularities. The invariants that represent the opening of the cone are real numbers and, he obtained a classification of these surfaces. More precisely, given $ p_1, \ldots, p_k \in S$ and $\theta_1, \ldots, \theta_k> 0 $, if $\chi(S) + \sum_{i=1}^k(2 \pi- \theta_i)<0 $ (respectively, $=0$ or $=1$), then there exists a hyperbolic metric (respectively, Euclidean or spherical) in $S \setminus \{p_1, \ldots, p_k \}$ with a conical angle $\theta_i$ in $p_i$.

Now, we will prove the  inverse surgery for the case $\Re \alpha>1$, that is, when the degree of multi-valued function $z^\alpha$ is at least 2. Given two twin curves starting of $z^\alpha$, we remove an angle $2\pi$, that we see in a fundamental domain, it would be to remove one of the biholomorphic strips of the disk minus a radius through of $dev(x)=e^{\alpha x}$ and glue in the perpendicular way to the boundary of strip. 

\begin{Proposition}\label{surgeryinverse}Let $\gamma_1$, $\gamma_2$ be a pair of twins that start from a singularity $p$ of type $z^\alpha$, with $\Re \alpha>1$, forming a sector with angle $2\pi$ and end-points $q_1$ and $q_2$ are regular points. The  inverse surgery of the movement that removes the angle $2\pi$ in $p$ results in a simple branch point where start two twin curves whose end-points are a singularity of type $z^{\alpha-1}$ and a regular point.
\end{Proposition}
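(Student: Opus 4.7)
The plan is to construct the desired configuration by performing a direct cut-and-paste along $\gamma_1\cup\gamma_2$, exactly as in the moving-branch-point surgery of Figure \ref{movpontoramificacao}, but with $p$ playing the role of the point that is split and $q_1,q_2$ playing the role of the endpoints that are merged. First I would pass to the universal cover of a deleted neighborhood of $p$ as in Proposition \ref{acaorecuniversal}: the germ $z^{\alpha}$ corresponds to the pair $(2\pi i,\frac{2\pi i}{\alpha})$ acting on $T$, with developing map $e^{\alpha x}$. Under the hypothesis $\Re\alpha>1$, the discussion preceding Proposition \ref{acaorecuniversal} (see Figure \ref{geometriazalpha}) decomposes a fundamental domain into strips of angle $2\pi$, and the twin lifts $\tilde\gamma_1,\tilde\gamma_2$ bound precisely one such strip $B$, because both twins develop to the same simple arc $\hat\gamma\subset\mathbb{CP}^1$ and delimit a sector of angle $2\pi$ at $p$.

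Next I would cut $S$ along $\gamma_1\cup\gamma_2$ and reglue with the opposite pairing of boundary sides. In Figure \ref{movpontoramificacao} this pairing produces, on the side of $p$, two new points $p_1$ and $p_2$ that separate the strip $B$ from its complement at $p$, and on the side of the endpoints it fuses $q_1$ and $q_2$ into a single point $q$. Since the two sectors at $p$ determined by the twins have angles $2\pi(\Re\alpha-1)$ and $2\pi$, the new cone angles at $p_1$ and $p_2$ are $2\pi(\Re\alpha-1)$ and $2\pi$; and since $q_1,q_2$ are regular, $q$ has total cone angle $4\pi$, i.e.\ a simple branch point. The two sides of the original cut descend, in the reglued surface, to a pair of embedded curves starting at $q$, developing to $\hat\gamma$, and ending at $p_1$ and $p_2$; by construction they are the claimed new pair of twins.

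It then remains to identify the projective germs at $p_1$ and $p_2$. The point $p_2$ of cone angle $2\pi$ is regular, since a single projective chart of angle $2\pi$ uniformizes a deleted neighborhood. For $p_1$ I would invoke Proposition \ref{acaorecuniversal} a second time. The universal cover of a deleted neighborhood of $p_1$ still carries the developing map $e^{\alpha x}$, but removing the strip $B$ subtracts one monodromy translate from the deck action, so the generator of $\pi_1$ now acts by $x\mapsto x+2\pi i-\frac{2\pi i}{\alpha}$. Renormalizing this generator back to $x\mapsto x+2\pi i$ via the linear substitution $x\mapsto \frac{\alpha}{\alpha-1}x$ converts the developing map into $e^{(\alpha-1)x}$ and the monodromy vector into $\frac{2\pi i}{\alpha-1}$; by Proposition \ref{acaorecuniversal} the resulting pair $(2\pi i,\frac{2\pi i}{\alpha-1})$ is exactly the germ $z^{\alpha-1}$.

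The main obstacle I expect is precisely this last step: cone-angle accounting alone determines $\Re(\alpha-1)$, but the Fuchsian germ $z^{\alpha-1}$ also records $\Im\alpha$ through the twist between successive strips in Figure \ref{geometriazalpha}, so I have to verify that the cut-and-paste removes exactly one full period vector $\frac{2\pi i}{\alpha}$ from the action on $T$, and not some other integer combination of the two period vectors. This is the content of the assertion that $\tilde\gamma_1,\tilde\gamma_2$ delimit a \emph{single} fundamental strip of the decomposition, and it is what lets Proposition \ref{acaorecuniversal} upgrade the angle computation into a genuine identification of the projective model at $p_1$ as $z^{\alpha-1}$.
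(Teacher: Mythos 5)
Your proposal is correct and follows essentially the same route as the paper: decompose the universal cover into strips bounded by the twin lifts, remove one strip so that the deck generator becomes $x\mapsto x+2\pi i-\frac{2\pi i}{\alpha}$, check equivariance of the developing map $e^{\alpha x}$ under this new action, and rescale by $t\mapsto\frac{\alpha}{\alpha-1}t$ to recognize the pair $(2\pi i,\frac{2\pi i}{\alpha-1})$ as the germ $z^{\alpha-1}$ via Proposition \ref{acaorecuniversal}. The subtlety you flag at the end is exactly the point the paper settles by verifying $D(x+w)=D(x)\cdot e^{2\pi i\alpha}$ with $w=2\pi i-\beta$, using that the twins develop to the same arc so that $D(0)=D(\beta)$.
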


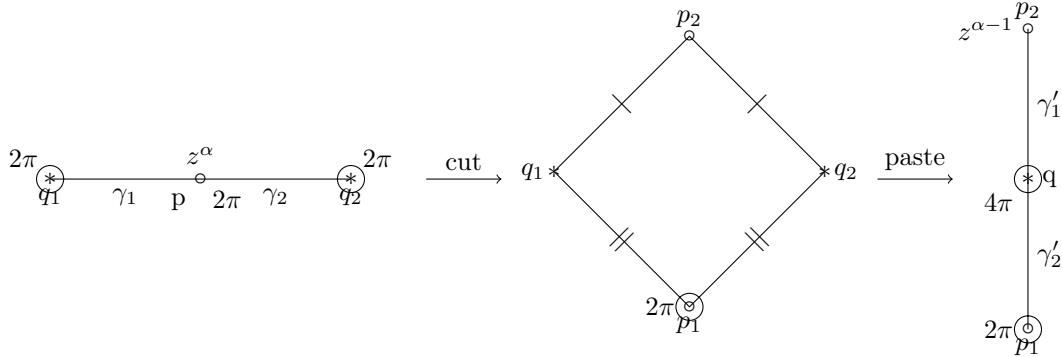
\begin{figure}[h!]
	\centering
	\begin{tikzpicture}
		\draw (-1,2)node{$\ast$}node[below]{$q_1$} circle(0.18)node[above left=1pt]{$2\pi$}--(1,2)node[midway,below]{$\gamma_1$}node{$\circ$}node[below left=2pt]{p} node[above=2pt]{$z^\alpha$}node[below right=1pt]{$2\pi$}--(3,2)node[midway,below]{$\gamma_2$}node{$\ast$}node[below]{$q_2$}circle(0.18)node[above right=1pt]{$2\pi$};
		
		\draw[->](4,2)--(5,2)node[midway,above]{cut};
		
		\draw (7.5,0.3)node{$\circ$}node[below]{$p_1$}circle(0.18)node[left=2pt]{$2\pi$}--++(1.8,1.8)node[midway,sloped]{$||$}node{$\ast$}node[right]{$q_2$}--++(-1.8,1.8)node[midway,sloped]{$|$}node{$\circ$}node[above]{$p_2$}--++(-1.8,-1.8)node[midway,sloped]{$|$}node{$\ast$}node[left]{$q_1$}--cycle node[midway,sloped]{$||$};
		
		\draw[->](10,2)--(11,2)node[midway,above]{paste};
		
		\draw (12,0)node{$\circ$}node[below]{$p_1$}circle(0.18)node[left=2pt]{$2\pi$}--(12,2)node[midway,right]{$\gamma_2'$}node{$\ast$}node[right=2pt]{q}circle(0.18)node[below left=2pt]{$4\pi$}--(12,4)node[midway,right]{$\gamma_1'$}node{$\circ$}node[above]{$p_2$}node[left=2pt]{$z^{\alpha-1}$};
		
	\end{tikzpicture}
	\caption{Inverse moving branch points in $z^\alpha$}
	\label{movpontoramificacaoalpha}
\end{figure}

\begin{proof} After the cut and paste process, we identify two regular points $q_1$ and $q_2$, making a simple branch point $q$. We still need to show that removing an angle $2\pi$ of $z^\alpha$ will result in a singularity of type $z^{\alpha-1}$.
	
We use the decomposition of the universal covering of $\mathbb{D}^*$ as in Figure \ref{geometriazalpha} and we note the degree of $dev $ is $\lceil\Re\alpha\rceil$.

	
We will remove a biholomorphic strip from the disk minus the radius by $dev$ and all its copies via the action of fundamental group $\pi_1(\mathbb{D}^*)$ and define a relation in the lines $av + bu = 2 \pi l$, $l \in \mathbb{Z}$, given by $u + iv \sim u + iv + j \beta $, $j \in \mathbb{Z}$. This  identifies the boundaries of strips in the direction of vector $\beta=\frac{2 \pi i}{\alpha}$.
	
	
	The initial $dev$ is given by $ D(x) = e^{\alpha x}$. Since the family of lines are twins of the projective structure in $\mathbb{D}^*$ we see that $D(0)=D(\beta)$ and it follows from the equivalence of $D$ by the monodromy representation  $\rho:\pi_1(\mathbb{D}^*)\rightarrow PSL_2(\mathbb{C})$ given by $\rho ([\gamma])=e^{2 \pi i \alpha}w$ that $D(2\pi i) = D(0 + 2 \pi i) = D (\beta)\cdot e^{2 \pi i \alpha} $, here we use the action $ x \mapsto x + 2\pi i $ of fundamental group in $ T $.
	
	Note that $D(\beta + w)=D(\beta)\cdot e^{2 \pi i \alpha}$, where $w=2\pi i-\beta$. We affirm that $D$ is equivalent for the monodromy representation $\rho$ and the new action of the fundamental group is given by $ x \mapsto x + w $. We just need to show that, $ \forall x \in T $, $D(x + w) = D(x) \cdot e^{2 \pi i \alpha}$. In fact, $D(x + w)=D(x + 2\pi i- \beta) = D(x- \beta) \cdot e^{2\pi i \alpha} = D(x) \cdot e^{2 \pi i\alpha}$, since $D(x-\beta)=D(x)$.
	
	We will obtain a new domain $T'$ of the covering application and a new developing map $D_1$ equivariant with respect to $\rho$ with the new action of the fundamental group and the images will coincide with the initial developing map at the respective paste points.
	
	The domain $T'$ is simply connected, its quotient by the action of $x \mapsto x + w$ is homeomorphic to $\mathbb{D}^*$ and therefore can be taken as a universal cover of $\mathbb{D}^* $. 
	
	Using the classification obtained in Proposition \ref{acaorecuniversal}, we have that after the surgery, they are given by $ (w, \beta) = \left(2\pi i \left (\frac{\alpha-1}{\alpha} \right), \frac{2\pi i}{\alpha} \right) $ and the linear transformation $L:\mathbb{R}^2 \rightarrow \mathbb{R}^2$ given by $L(t) = \frac{\alpha}{\alpha-1}t $ takes them to a pair of the form $(2 \pi i, \beta')$ where $\beta'=\frac{2 \pi i}{\alpha -1}$ and therefore the structure obtained is equivalent  biholomorphically to $z^{\alpha-1} $ in $\mathbb{D}^*$ with new developing map is $D_1 \circ L^{-1}$.
	
\end{proof}

We can conclude that the rigid models, that is, those that do not have twins path (do not have excess angle) and the models $ \log z + \frac{1}{z^{n}}$, $ n \geq 2 $ and $ z^{\alpha}$, $ \Re \alpha> 1 $ have twins and candidates to be deformed isomonodromically.

\section{Branching order}
In this section, we explore the Problem \ref{minimizing} posed by Gallo-Kapovich-Marden about minimizing angles on projective structures. Let  $\rho:\pi_1(S^*) \rightarrow PSL_2(\mathbb{C})$ be a representation, what is the minimum branching order of a projective structure of Fuchsian-type with this monodromy representation? 

We obtain a result with a existence of obstruction for prescribing local models without branch points and angle excess in the cusps of a compact Riemann surface.

\subsection{Algebro-geometric interpretations of projective structures of Fuchsian-type}

There are exactly two oriented topologically $\mathbb{S}^2$-bundle over the closed Riemann surface $S$ and they are distinguished by the 2nd Stiefel-Whitney class $w_2(P)$ of the bundle  $\pi:P\rightarrow S$, $\sigma^2\equiv w_2(P)(mod\ 2)$, where $\sigma$ is section of $\pi$. Then, the parity of the self-intersection $\sigma^2$ depends only on the bundle: $\sigma^2$ is even if the bundle is diffeomorphic to the trivial bundle and it is odd, otherwise.

\begin{Proposition}\label{paridadesigma} Let $\sigma$ and $\sigma'$ be two holomorphic sections of holomorphic  $\mathbb{CP}^1$-bundles on a compact Riemann surface that have the same 2nd class of Stiefel-Whitney. We have to
	$$\sigma^2 \equiv \sigma'^2 (mod \ 2). $$
\end{Proposition}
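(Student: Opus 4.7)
The plan is to reduce the statement to the fact already flagged before the proposition, namely that for a single oriented $\mathbb{S}^2$-bundle $\pi\colon P\to S$ the parity of the self-intersection of any section is a topological invariant of $P$, and equals $w_2(P)$. Since $P$ and $P'$ are assumed to share the same $w_2$, they are diffeomorphic as oriented $\mathbb{S}^2$-bundles (classification of such bundles over a surface by $w_2\in H^2(S,\mathbb{Z}/2)=\mathbb{Z}/2$), and the result will follow at once.

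First I would establish the key lemma: if $\sigma_1,\sigma_2$ are any two holomorphic sections of the same $\mathbb{CP}^1$-bundle $\pi\colon P\to S$, then $\sigma_1^2\equiv\sigma_2^2\pmod 2$. For this, let $F$ denote a fiber class in $H_2(P,\mathbb{Z})$. Both $\sigma_i$ satisfy $\sigma_i\cdot F=1$ since they are sections, hence $(\sigma_1-\sigma_2)\cdot F=0$. The classes in $H_2(P,\mathbb{Z})$ orthogonal to $F$ are generated (modulo pullbacks from $S$, which are trivial on a curve) by $F$ itself, so $\sigma_1-\sigma_2=nF$ in homology for some $n\in\mathbb{Z}$. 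Then
\[
\sigma_1^2-\sigma_2^2=(\sigma_1-\sigma_2)\cdot(\sigma_1+\sigma_2)=nF\cdot(\sigma_1+\sigma_2)=2n,
\]
proving the parity claim. This shows $\sigma^2\bmod 2$ depends only on the $\mathbb{CP}^1$-bundle $P$, not on the particular section.

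Next I would transfer the invariant across the two bundles. Because $P$ and $P'$ have the same Stiefel--Whitney class $w_2$, there exists an orientation-preserving diffeomorphism $\Phi\colon P\to P'$ covering (up to isotopy) the identity of $S$. Pushing $\sigma$ forward by $\Phi$ gives a topological section $\Phi\circ\sigma$ of $P'$, and topological self-intersection numbers are preserved by diffeomorphisms, so $\sigma^2=(\Phi\circ\sigma)^2$ in $H_*$. Applying the lemma to the two (topological) sections $\Phi\circ\sigma$ and $\sigma'$ of the same bundle $P'$ yields $(\Phi\circ\sigma)^2\equiv\sigma'^2\pmod 2$, and combining the two equalities gives $\sigma^2\equiv\sigma'^2\pmod 2$, as desired.

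The main subtlety I expect is the homological step asserting that $(\sigma_1-\sigma_2)\cdot F=0$ forces $\sigma_1-\sigma_2$ to be an integer multiple of $F$: one must note that $H_2(P,\mathbb{Z})$ is generated, via the Leray--Hirsch/projective bundle decomposition, by a chosen section class and the fiber class $F$, so any class pairing to $0$ against $F$ is of the form $nF$. The rest of the argument is formal once the topological classification of oriented $\mathbb{S}^2$-bundles over a closed surface by $w_2$ is invoked.
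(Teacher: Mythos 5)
Your proof is correct. Note that the paper itself gives no proof of this proposition: it simply records, in the preceding paragraph, the standard fact that oriented $\mathbb{S}^2$-bundles over a closed surface are classified by $w_2$ and that $\sigma^2\equiv w_2(P)\ (\mathrm{mod}\ 2)$ for any section, from which the statement is immediate. What you do differently is supply the actual argument behind that fact: the in-bundle step via the decomposition $H_2(P,\mathbb{Z})=\mathbb{Z}\sigma_0\oplus\mathbb{Z}F$ (so that two section classes differ by a multiple of $F$ and $\sigma_1^2-\sigma_2^2=2n$), and the transfer between the two bundles via a bundle isomorphism realizing the classification by $w_2\in H^2(S,\mathbb{Z}/2)$. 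Your homological computation is sound --- the intersection form is unimodular on the span of $\sigma_0$ and $F$, and $H_2(P,\mathbb{Z})$ is free of rank $2$, so these classes do generate --- and you correctly observe that the key lemma only uses that a section class pairs to $1$ against $F$, so it applies to the merely topological section $\Phi\circ\sigma$. The one point worth tightening is the phrasing ``covering (up to isotopy) the identity'': the classification of oriented $\mathbb{S}^2$-bundles by $w_2$ gives a genuine bundle isomorphism over the identity of $S$, which is what you need for $\Phi\circ\sigma$ to be a section of $P'$. In short, your argument proves more than the paper asserts, and does so correctly.
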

In particular, holomorphic sections of the same $\mathbb{CP}^1$-bundle have self-intersection with the same parity.

Let $ \pi: P \rightarrow S $ be a $\mathbb{CP}^1$-bundle with a Riccati foliation, after a flipping of an invariant fiber we get another $\mathbb{CP}^1$-bundle $ \pi': P' \rightarrow S $ also equipped with an  equivalent birationally Riccati foliation equivalent to $\pi$. In fact, flipping changes the topological class of the bundle:

\begin{Proposition} \label{paridadestiefelwhitney}
	The second Stiefel-Whitney classes $w_2(P)$ and $w_2(P')$ have different parities.
\end{Proposition}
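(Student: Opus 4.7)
The strategy rests entirely on Proposition \ref{paridadesigma}: the parity of $\sigma^2$ depends only on the topological class of the bundle, and coincides with $w_2(P) \pmod 2$. So it suffices to exhibit a pair of holomorphic sections $\sigma \subset P$ and $\sigma' \subset P'$ whose self-intersections have opposite parities; by the Proposition, this forces $w_2(P) \neq w_2(P')$ in $H^2(S, \mathbb{Z}/2)$.

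First I would fix a holomorphic section $\sigma$ of $\pi\colon P \to S$, which exists by Tsen's theorem as invoked in Section~2.3, and decompose the flipping of the invariant fiber $F$ at the point $p \in F$ into its two elementary steps: a blowup $b \colon \widetilde{P} \to P$ at $p$ with exceptional divisor $E$, followed by the contraction $c \colon \widetilde{P} \to P'$ of the strict transform $\widetilde{F}$ of $F$, which satisfies $\widetilde{F}^{\,2} = -1$. The section $\sigma'$ of $P'$ associated to $\sigma$ is then obtained by taking the strict transform $\widetilde{\sigma}$ of $\sigma$ in $\widetilde{P}$ and pushing it forward by $c$.

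The heart of the argument is a direct intersection computation on $\widetilde{P}$, splitting into two cases. If $p \in \sigma$, then $\widetilde{\sigma} = b^*\sigma - E$, so $\widetilde{\sigma}^{\,2} = \sigma^2 - 1$, and one checks
$$\widetilde{\sigma} \cdot \widetilde{F} = (b^*\sigma - E)\cdot(b^*F - E) = \sigma \cdot F + E^2 = 1 - 1 = 0;$$
hence $\widetilde{\sigma}$ is disjoint from $\widetilde{F}$, descends to a section $\sigma'$ of $P'$, and $(\sigma')^2 = \sigma^2 - 1$. If $p \notin \sigma$, then $\widetilde{\sigma} = b^*\sigma$ meets $\widetilde{F}$ transversely at one point, so the pushforward satisfies $c^*\sigma' = \widetilde{\sigma} + \widetilde{F}$ (the coefficient being pinned down by $c^*\sigma' \cdot \widetilde{F} = 0$), and expanding gives $(\sigma')^2 = \sigma^2 + 1$. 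In either case $(\sigma')^2 \equiv \sigma^2 + 1 \pmod 2$.

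Combining this shift of parity with Proposition \ref{paridadesigma} yields $w_2(P') \not\equiv w_2(P) \pmod 2$, which is the claim. The argument is essentially mechanical; the only point requiring a moment's thought is verifying that $\widetilde{\sigma}$ is an honest irreducible section of the ruled surface $\widetilde{P} \to S$ (so that its image under $c$ is a section of $P'$), and this is automatic because $\sigma$ meets the fiber $F$ transversely in the single point $p$, forcing the multiplicity of $\sigma$ at $p$ to be one and leaving no excess components after blowup.
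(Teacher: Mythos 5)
Your proof is correct and follows essentially the same route as the paper: track a holomorphic section through the flip, show its self-intersection shifts by $\pm 1$ according to whether the blown-up point lies on the section, and conclude via Proposition \ref{paridadesigma}. The only difference is that you spell out the intersection computation on the intermediate blowup $\widetilde{P}$ (strict transforms, $E^2=-1$, the pullback formula under the contraction), which the paper merely asserts.
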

\begin{proof}Let $\sigma$ be holomorphic section of fiber bundle $\pi: P \rightarrow S $, after a flipping on an invariant fiber, if we consider a blow-up at a point outside the section, after flipping we have the new section $ \tilde{\sigma}$ de $ \pi': P' \rightarrow S$ has self-intersection $ \tilde{\sigma}^2 = \sigma^2+1$, and if we consider blow-up at a point in the section, after flipping we have that the new section has self-intersection $ \sigma^2-1 $. 
\end{proof}

In general, the intersection numbers of holomorphic sections are either all even, or all odd: $\sigma^2\mod 2$ is the topological invariant of the bundle. Then, at the same compactification, two holomorphic sections have the same parity of tangency order with the foliation.



Let $ \pi: P \rightarrow S $ be a $ \mathbb{CP}^1$-bundle over $S$ associated to the monodromy representation $ \rho: \pi_1 (S^*) \rightarrow PSL_2 (\mathbb{C})$ of a projective structure of Fuchsian-type in $S$. This $\mathbb{CP}^1$-bundle is equipped with a Riccati foliation $ \mathcal{F}_\rho$ (see Brunella \cite{B}[Section 4.1]) with the same monodromy obtained by compactification the suspension of the representation $\rho$. The developing map of the projective structure defined in $ S $ defines a non-trivial holomorphic section $ \sigma$ of $ \pi$ non-invariant by $\mathcal{F}_\rho$.

We obtain a formula that relates topological invariants of the surface with the tangency order of Riccati foliation with the holomorphic section (see \cite[p. 22]{B}) of the suspension and its self-intersection.

\begin{Proposition}\label{autointersection}Under the conditions above, the self-intersection of $\sigma(S)$ in $P$ is
	$$\sigma(S) \cdot \sigma(S) = tang(\mathcal{F}_\rho, \sigma(S)) + \chi(S)-k_0, $$
	where $ k_0 $ represents the number of fibers invariant by foliation $\mathcal{F}_{\rho} $.
\end{Proposition}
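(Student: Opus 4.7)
The plan is to rewrite the self-intersection of $\sigma(S)$ in terms of the tangent bundle of the foliation, then identify this bundle explicitly using the Riccati structure.

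First, I would recall the general tangency formula for a non-invariant compact curve $C$ on a foliated surface $(X,\mathcal{F})$. The inclusion $T_{\mathcal{F}} \hookrightarrow TX$ restricted to $C$, composed with the projection $TX|_C \to N_{C/X}$, gives a map of line bundles $T_{\mathcal{F}}|_C \to N_{C/X}$ which is not identically zero because $C$ is not $\mathcal{F}$-invariant. Its vanishing locus has length exactly $\mathrm{tang}(\mathcal{F},C)$, so
\begin{equation*}
C \cdot C \;=\; \mathrm{tang}(\mathcal{F},C) \;+\; T_{\mathcal{F}} \cdot C.
\end{equation*}
Applied to $C = \sigma(S)$ (which is non-invariant by construction), the proposition reduces to the identity $T_{\mathcal{F}_\rho} \cdot \sigma(S) = \chi(S) - k_0$.

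Next I would identify $T_{\mathcal{F}_\rho}$ as a line bundle on $P$. The differential $d\pi\colon TP \to \pi^*TS$ sends $T_{\mathcal{F}_\rho}$ to $\pi^*TS$, and this composite is an isomorphism precisely where a leaf is transverse to the fiber. For a Riccati foliation, failure of transversality occurs exactly along the invariant fibers $F_1,\dots,F_{k_0}$. I would check the order of vanishing by inspection of the local models used in Lemma \ref{compactidefibrados}: for the non-parabolic model the foliation is generated by $z\partial_z + \alpha w\partial_w$ whose projection to $TS$ is $z\partial_z$, vanishing to order one along $\{z=0\}$; the parabolic and trivial models give the same order-one vanishing. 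Consequently $T_{\mathcal{F}_\rho} \otimes \pi^*TS^{-1} \cong \mathcal{O}_P(-\sum_i F_i)$, i.e.
\begin{equation*}
T_{\mathcal{F}_\rho} \;\cong\; \pi^*TS \otimes \mathcal{O}_P\!\left(-\textstyle\sum_{i=1}^{k_0} F_i\right).
\end{equation*}

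Finally, I would intersect with $\sigma(S)$. By the projection formula, and since $\pi\circ\sigma=\mathrm{id}_S$, one has $\pi^*TS \cdot \sigma(S) = \deg TS = \chi(S)$, while $F_i \cdot \sigma(S) = 1$ for each invariant fiber. Summing gives $T_{\mathcal{F}_\rho} \cdot \sigma(S) = \chi(S) - k_0$, which combined with the tangency formula yields the claim.

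The step most likely to require care is the identification of $T_{\mathcal{F}_\rho}$: one must make sure the vanishing order of $d\pi|_{T_{\mathcal{F}_\rho}}$ along each invariant fiber is exactly one in all three local models (including the saddle-node in the parabolic case), and that the isolated foliation singularities sitting on those fibers do not contribute extra zeros to $d\pi$ as a morphism of line bundles. Once the local computation is done uniformly, the rest is a routine intersection calculation on the ruled surface $P$.
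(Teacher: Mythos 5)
Your proof is correct and follows essentially the same route as the paper: both reduce the claim to Brunella's tangency formula $\sigma^2 = tang(\mathcal{F}_\rho,\sigma(S)) + T_{\mathcal{F}_\rho}\cdot\sigma(S)$ combined with the identification $T_{\mathcal{F}_\rho}\cong \pi^*TS\otimes\mathcal{O}_P\bigl(-\sum_{i=1}^{k_0}F_i\bigr)$, which the paper simply quotes from Brunella in its dual cotangent form with all multiplicities equal to $1$, while you rederive it by tracking the vanishing of $d\pi$ on $T_{\mathcal{F}_\rho}$ in the local models of Lemma \ref{compactidefibrados}. The concluding computation $\pi^*TS\cdot\sigma(S)=\chi(S)$ and $F_i\cdot\sigma(S)=1$ is the same in both arguments.
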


\begin{proof}It's a consequence of Brunella's formula \cite{B}: the cotangent bundle of a Riccati foliation is
	$$T_{\mathcal{F_\rho}}^*=\pi^*(K_S)\otimes\mathcal{O}_P\left(\sum_{j=1}^{n}k_jF_j\right),$$
	where $K_S$ is the canonical bundle and $F_1,\ldots,F_n$ are the $\mathcal{F_\rho}$-invariant fibres of multiplicity $k_1,\ldots,k_n.$
Since after the compactification, there are $k_0$ $\mathcal{F_\rho}$-invariant fibers with multiplicity 1, we have  $T_{\mathcal{F}_\rho}.\sigma=2-2g-k_0$	with the formula $T_\mathcal{F_\rho}\cdot \sigma=\sigma\cdot \sigma-tang(\mathcal{F}_\rho,\sigma)$  (see \cite[Proposition 2.2]{B}), the result follows.
	
\end{proof}

\begin{remark}We can prove this Proposition with the same ideas of Proposition 11.2.2 of \cite{GKM}, for complete proof see \cite[Teorema 5.3]{NS}.
\end{remark}




\begin{Corollary}Let $\sigma$ and $ \sigma'$ be two  non-trivial holomorphic sections of compactified bundle over $S$ associated to monodromy representation $ \rho: \pi_1 (S^*) \rightarrow PSL_2(\mathbb{C})$. Then,
	$$ tang(\mathcal{F}_\rho, \sigma(S)) \equiv tang (\mathcal{F}_\rho,\sigma'(S)) \mod 2, $$
	where $\mathcal{F}_\rho$ is Riccati foliation of  compactified bundle.
\end{Corollary}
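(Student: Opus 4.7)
The plan is to combine the self-intersection formula in Proposition \ref{autointersection} with the topological invariance statement in Proposition \ref{paridadesigma} (or equivalently the observation that $\sigma^2 \bmod 2$ depends only on $w_2(P)$). Since both sections $\sigma$ and $\sigma'$ are holomorphic sections of the \emph{same} compactified $\mathbb{CP}^1$-bundle $\pi\colon P\to S$ associated with $\rho$, the 2nd Stiefel--Whitney class $w_2(P)$ is one and the same, so
$$\sigma(S)\cdot\sigma(S)\equiv \sigma'(S)\cdot\sigma'(S)\pmod 2.$$

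Next I would apply Proposition \ref{autointersection} to both sections with respect to the same Riccati foliation $\mathcal{F}_\rho$. Since the compactification and the foliation are fixed, the quantity $\chi(S)-k_0$ is the same in both formulas, where $k_0$ is the number of $\mathcal{F}_\rho$-invariant fibres. Writing
$$\sigma(S)\cdot\sigma(S)=tang(\mathcal{F}_\rho,\sigma(S))+\chi(S)-k_0,\qquad \sigma'(S)\cdot\sigma'(S)=tang(\mathcal{F}_\rho,\sigma'(S))+\chi(S)-k_0,$$
and subtracting, I get
$$tang(\mathcal{F}_\rho,\sigma(S))-tang(\mathcal{F}_\rho,\sigma'(S))=\sigma(S)\cdot\sigma(S)-\sigma'(S)\cdot\sigma'(S).$$

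By the first step the right-hand side is even, so the conclusion $tang(\mathcal{F}_\rho,\sigma(S))\equiv tang(\mathcal{F}_\rho,\sigma'(S))\pmod 2$ follows at once. There is no real obstacle here: the corollary is a one-line consequence of Propositions \ref{paridadesigma} and \ref{autointersection} once one observes that $\chi(S)-k_0$ is a bundle/foliation invariant independent of the chosen section, and that non-triviality of the sections is only used to ensure the intersection numbers and the tangency indices are well-defined finite quantities (so that Brunella's formula applies in the form stated).
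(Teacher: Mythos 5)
Your argument is correct and is exactly the paper's proof: the corollary is stated there to follow immediately from Propositions \ref{paridadesigma} and \ref{autointersection}, which is precisely the combination you spell out (same $w_2(P)$ forces $\sigma^2\equiv\sigma'^2\pmod 2$, and the term $\chi(S)-k_0$ cancels upon subtraction). Nothing further is needed.
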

\begin{proof}It follows immediately from the propositions \ref{paridadesigma} and \ref{autointersection}.
\end{proof}


\subsection{Minimum branching order}

Let $\rho: \pi_1(S^*) \rightarrow PSL_2(\mathbb{C})$ be a representation.  Minimizing the branching order of a projective structure of Fuchsian-type on $S$ with monodromy $\rho$ is equivalent to minimizing the index $tang(\mathcal{F}_\rho, \sigma(S))$ of a Riccati foliation and a section $\sigma$ of the $\mathbb{CP}^1$-bundle with a specific compactification that defines a projective structure.

We recall that a branched projective structure induces a complex structure and thus angles on S. Unbranched points are called regular and the total angle around them is $2\pi$. The cone-angle around a point p whose branching order is $n_p\geq 2$ is $2\pi n_p$. The branching divisor of $\sigma$ is the divisor
$\sum_{p\in S}(n_p-1)p$. Its degree $\sum_{p\in S}(n_p-1)$ is called the total branching order of $\sigma$.

We extend the notion of branching order to singular points of Fuchsian-type. In fact, it will follow from Theorem \ref{teoremaexistencia} that around each singular point $ p $ of the projective structure $\sigma$ with given monodromy $ \rho $ the projective charts are defined by $z^{\alpha + n_p} $, $ 0 <\Re \alpha \leq1 $ or $ \log z + \frac{1}{z^{n_p}} $. We define $ n_p \in \mathbb{Z} $ as the branching order  at each singular point $p$ and the sum $e(\sigma) = \sum_{p \in S} n_p $ as the branching order of projective structure $\sigma $. We also define 
\begin{equation*}
d(\rho) = \min \{e(\sigma): \sigma \ \text{is a projective structure of Fuchsian-type with monodromy} \rho \}.
\end{equation*}
Gallo-Kapovich-Marden proved that $d(\rho)=0$ for all liftable non-elementary representations $\rho$ and $d(\rho)=1$ for all non-liftable non-elementary representations $\rho$. 

We can see the sum $e(\sigma)$ as a tangency order of a Riccati foliation with sections of fiber bundles from compactification of suspension of a representation $\rho$. We fix a complex structure on $S$, it follows from the proof of the Existence Theorem that $n_p$ are tangency orders of foliation with the section: 
$$e(\sigma)= tang(\mathcal{F}_\rho^{min},\sigma(S)),$$
where $\mathcal{F}_\rho^{min}$ is the foliation provided the compactification which the local models are:
\begin{itemize}
	\item $\alpha wdz-zdw=0$, $\alpha\in\mathbb{C}^*$ e $0\leq\Re\alpha<1$, at the cusps with non-parabolic monodromy; 
	\item $zdw-dz=0$ at the cusps with parabolic monodromy;
	\item $dw=0$ at the cusps with trivial monodromy.
\end{itemize}

In that compactification, $e(\sigma)=tang(\mathcal{F}_\rho^{min},\sigma)=0$ if and only if the section $\sigma$ is transversal to $\mathcal{F}_\rho^{min}$. For this reason, we'll call it minimum compactification.

The Theorem \ref{structcomexcessos} is about representations that are not realized as monodromy of projective structures of Fuchsian-type with minimal branching order. For the these cases, we will necessarily have $ d(\rho) \geq 1 $, that is, these cases do not realize projective structures without angle excesses.


First, it follows from the formula $\sigma^2\equiv w_2(P)(mod\ 2)$ and Proposition \ref{autointersection}.

\begin{Lemma}\label{stiefelwhitneyexcessos}
	Let $\rho:\pi_1(S^*)\rightarrow PSL_2(\mathbb{C})$ be a monodromy representation of a projective structure of Fuchsian-type $\sigma$ on $S$, we have:
	$$w_2(P)+k_0\equiv e(\sigma)\mod 2$$
	where $w_2(P)$ is 2nd Stiefel-Whitney class of the minimum compactification $\mathcal{F}_\rho^{min}$ and $k_0$ represents the number of points with non-trivial local monodromy.
\end{Lemma}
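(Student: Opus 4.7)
The plan is to chain together three facts already established in the text: the topological formula $\sigma^2\equiv w_2(P)\pmod 2$ recalled just before the lemma, the self-intersection formula of Proposition \ref{autointersection}, and the identification $e(\sigma)=tang(\mathcal{F}_\rho^{\min},\sigma(S))$ built into the definition of the minimum compactification. The only substantive observation needed on top of these is that the Euler characteristic $\chi(S)=2-2g$ is automatically even, which makes the $\chi(S)$ term drop out when we reduce modulo $2$.

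In more detail, I would proceed as follows. First, apply Proposition \ref{autointersection} to the section $\sigma$ inside the specific compactification $\mathcal{F}_\rho^{\min}$: this yields
\begin{equation*}
\sigma(S)\cdot\sigma(S)=tang(\mathcal{F}_\rho^{\min},\sigma(S))+\chi(S)-k_0.
\end{equation*}
Here $k_0$ is, by definition of the minimum compactification, exactly the number of cusps carrying an invariant fiber, i.e.\ the cusps whose local monodromy is non-trivial (the trivial-monodromy model $dw=0$ contributes no invariant fiber). Next, invoke the equivalent formulation of the minimum compactification: for each singular point $p$ the chart is $z^{\alpha+n_p}$ with $0\le\Re\alpha<1$ or $\log z+1/z^{n_p}$, and the integer $n_p$ equals precisely the tangency order of $\sigma$ with $\mathcal{F}_\rho^{\min}$ at $p$; at unbranched regular points $\sigma$ is transverse, so the local contribution is $0$, and at branch points of order $n_p$ it is $n_p$. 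Summing gives $tang(\mathcal{F}_\rho^{\min},\sigma(S))=e(\sigma)$.

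Combining the two previous displays produces
\begin{equation*}
\sigma^2=e(\sigma)+\chi(S)-k_0.
\end{equation*}
Now reduce modulo $2$. Since $\chi(S)=2-2g$ is even, it disappears, and the parity identity $\sigma^2\equiv w_2(P)\pmod 2$ recalled from the preceding subsection converts the left-hand side into $w_2(P)$. This gives
\begin{equation*}
w_2(P)\equiv e(\sigma)-k_0\equiv e(\sigma)+k_0\pmod 2,
\end{equation*}
which rearranges to the claimed congruence $w_2(P)+k_0\equiv e(\sigma)\pmod 2$.

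There is no real obstacle in this proof; the content is packaged almost entirely in the two results being cited. The only point that merits a sentence of justification is the identification $e(\sigma)=tang(\mathcal{F}_\rho^{\min},\sigma(S))$, since the reader must be convinced that the branching orders $n_p$ read off from the Fuchsian-type local models $z^{\alpha+n_p}$ and $\log z+1/z^{n_p}$ coincide with the local tangency indices of $\sigma$ against the specific local models $\alpha w\,dz-z\,dw=0$, $z\,dw-dz=0$, and $dw=0$ used to build $\mathcal{F}_\rho^{\min}$ — but this is exactly what the computations in the proof of Theorem \ref{teoremaexistencia} established.
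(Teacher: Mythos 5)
Your proposal is correct and follows essentially the same route as the paper, which derives the lemma directly from the congruence $\sigma^2\equiv w_2(P)\pmod 2$ and Proposition \ref{autointersection}, together with the identification $e(\sigma)=tang(\mathcal{F}_\rho^{min},\sigma(S))$ and the evenness of $\chi(S)$. Your explicit remark that in the minimum compactification the invariant fibers are exactly those over cusps with non-trivial local monodromy (so the two meanings of $k_0$ agree) is a useful clarification that the paper leaves implicit.
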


Thus, the sum of angle excesses $e(\sigma)$ has the same parity as $w_2(P) + k_0$. Since minimum compactification only depends on the monodromy, so does its 2nd Stiefel-Whitney class, and $k_0$ represents the number of cusps with non-trivial local monodromy. We conclude that the parity of the sum of angle excesses only depends on the monodromy.

If a representation $ \rho $ is the monodromy of a projective structure without angle excess, we have that the 2nd Stiefel-Whitney class of minimum compactification has the same parity as the number of invariant fibers by foliation.

Let $\rho:\pi_1(S^*)\rightarrow PSL_2(\mathbb{C})$ be a representation of fundamental group of surface of finite-type $S^*=S\setminus\{p_1,\ldots,p_k\}$, where $S$ is a closed surface of genus $g\geq1$, we consider a presentation of $\pi_1(S^*)$:
$$\langle a_i,b_i,c_j, \ i=1,\ldots, g, \ j=1,\ldots, k\ |\ \prod_{i=1}^g[a_i,b_i]\prod_{j=1}^kc_j=Id\rangle,$$
where $[a_i,b_i]=a_ib_ia_i^{-1}b_i^{-1}$ is the  commutator of $a_i$ and $b_i$, in this presentation we can define the representation $\rho$ as $\rho(a_i)=A_i$, $\rho(b_i)=B_i$ and $\rho(c_j)=C_j$, where $A_i,B_i$ and $C_j$ are elements of $PSL_2(\mathbb{C})$ that satisfy
$$  \prod_{i=1}^g[A_i,B_i]\prod_{j=1}^kC_j=Id.$$ 
For each generator $a_i,b_i$ and $c_j$, $\rho$ can lift in two ways,
$$\pm\tilde{A}_i,\pm\tilde{B}_i \ \mbox{e}\ \pm\tilde{C}_j\in  SL_2(\mathbb{C}),$$ 
whose projetivizations give the Möbius transformations of $A_i, B_i$ and $C_j$, respectively, we choose a sign for each element and the product
\begin{equation}\label{produtosl2c}
	\prod_{i=1}^g[\tilde{A}_i,\tilde{B}_i]\prod_{j=1}^k\tilde{C}_j
\end{equation} 
can be $\pm Id$. For the choices where the product gives $Id$ the representation lifts to  $SL_2(\mathbb{C})$, if it gives $-Id$, the representation does not lift to $SL_2(\mathbb{C})$.

In the case of genus $0$, the presentation of $\pi_1(S^*)$ there are not $a_i, b_i$, only $c_j$, $j=1,\ldots,k$, satisfying $\prod_{j=1}^kc_j=Id$.

\begin{remark}
	Every representation $\rho: \pi_1(S^*)\rightarrow PSL_2(\mathbb{C})$ lifts to $SL_2(\mathbb{C})$, because the group $\pi_1(S^*)$ is free. The question here is the representation lifts if we prescribe the local models (e. g. minimal angles) at the cusps or not. Namely, if we have a minimal angle at a point, then we choose a lift. 
\end{remark}

\begin{Proposition}The parity of the 2nd Stiefel-Whitney class of the minimum compactification changes depending on whether the representation lifts to $SL_2(\mathbb{C})$ or not.
\end{Proposition}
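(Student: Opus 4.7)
The plan is to identify $(-1)^{w_2(P)}I$ with the $SL_2(\mathbb C)$-valued product
$\varepsilon(\rho) := \prod_{i=1}^{g}[\tilde A_i,\tilde B_i]\prod_{j=1}^{k}\tilde C_j\in\{\pm I\}$,
where the $\tilde C_j\in SL_2(\mathbb C)$ are the canonical lifts of the local monodromies dictated by the minimum compactification and $\tilde A_i,\tilde B_i$ are any lifts (the product is insensitive to the signs of $\tilde A_i,\tilde B_i$ since commutators kill them). Once this identity is in hand, the proposition is immediate: a representation for which the relation (\ref{produtosl2c}) closes to $I$ produces a minimum compactification with $w_2(P)$ even, while one for which it closes to $-I$ produces a minimum compactification with $w_2(P)$ odd.

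First I would read off the canonical $SL_2(\mathbb C)$-lifts directly from the three local models in Lemma \ref{compactidefibrados}. At a non-parabolic cusp with minimum model $\alpha w\,dz - z\,dw = 0$, $0\le\Re\alpha<1$, take $\tilde C_j$ to be the element with eigenvalues $e^{\pm\pi i\alpha}$; at a parabolic cusp $dz-z\,dw=0$ take the unipotent lift; at a cusp with trivial monodromy the model $dw=0$ selects $\tilde C_j=I$. By Proposition \ref{paridadestiefelwhitney} a flipping of an invariant fibre simultaneously replaces $\tilde C_j$ with $-\tilde C_j$ and changes the parity of $w_2(P)$, so the sought identity is coherent under all elementary transformations at the cusps, and the problem reduces to one specific compactification.

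To establish the identity I would use a clutching argument. Because $S^*$ retracts onto a $1$-complex, $P|_{S^*}$ is topologically trivial; fix a global continuous trivialisation $\tau$. The minimum compactification supplies a second trivialisation $\tau_j$ over a small disk $U_j$ around each puncture $p_j$, and on $U_j\setminus\{p_j\}$ the transition $\phi_j := \tau_j\circ\tau^{-1}$ is a loop in $PSL_2(\mathbb C)$ whose class $[\phi_j]\in\pi_1(PSL_2(\mathbb C))\cong\mathbb Z/2$ is the only local invariant. Standard obstruction theory for oriented $\mathbb S^2$-bundles gives $w_2(P)\equiv\sum_j[\phi_j]\pmod 2$. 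A direct examination of each of the three local models shows that $[\phi_j]=0$ precisely when the path in $PSL_2(\mathbb C)$ traced by $\phi_j$ lifts to a loop in $SL_2(\mathbb C)$, i.e.\ precisely when the canonical $\tilde C_j$ is the lift compatible with $\tau$. Combining this with the classical fact that for a flat $PSL_2(\mathbb C)$-bundle over a closed surface the analogous obstruction $\prod_i[\tilde A_i,\tilde B_i]$ equals $(-1)^{w_2}I$ yields the desired identification $(-1)^{w_2(P)}I=\varepsilon(\rho)$.

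The main obstacle will be the clutching identification of the previous paragraph. For each of the three local models one must verify explicitly that the homotopy class of $\phi_j$ in $\pi_1(PSL_2(\mathbb C))$ equals the sign relating $\tilde C_j$ to the lift induced by $\tau$. The non-parabolic case with $\Re\alpha=0$ and the parabolic case require particular care because the invariant fibre then carries degenerate families of fixed sections (cf.\ the discussion around Figure \ref{geometriazalpha0} and Proposition \ref{acaorecuniversal}); one must confirm that the normalisation $0\le\Re\alpha<1$ adopted in the minimum compactification is exactly the range in which $\phi_j$ is null-homotopic when $\tilde C_j$ is chosen with eigenvalues $e^{\pm\pi i\alpha}$, rather than the shifted choice in $[1,2)$ which would reverse the sign on both sides of the identity.
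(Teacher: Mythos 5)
Your proposal is correct in outline but takes a genuinely different route from the paper. The paper's own proof is a covariance argument: it invokes Proposition \ref{paridadestiefelwhitney} to see that flipping an invariant fibre changes the parity of $w_2(P)$, observes that the same flipping negates the lift $\tilde C_j$ of the corresponding local monodromy and hence negates the product (\ref{produtosl2c}), and concludes that the two binary invariants move in lockstep, yielding ``two families that alternate parity''; the question of which parity corresponds to liftability is deferred to Proposition \ref{liftastiefelwhitney}, proved there by deforming to the trivial representation. You instead aim at the stronger identity $(-1)^{w_2(P)}I=\varepsilon(\rho)$ via a clutching computation, which, if completed, proves this proposition and Proposition \ref{liftastiefelwhitney} simultaneously and avoids the continuity argument. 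The one step that needs real care is the one you flag yourself: the formula $w_2(P)\equiv\sum_j[\phi_j]\pmod 2$ holds for an arbitrary topological trivialisation $\tau$ of $P|_{S^*}$, and the individual classes $[\phi_j]$ depend on $\tau$ (only their sum is invariant, since the boundary circles are null-homologous in $S^*$); identifying each $[\phi_j]$ with the sign relating $\tilde C_j$ to ``the lift compatible with $\tau$'' therefore only makes sense after $\tau$ has been adapted to the flat structure along a generating system of $\pi_1(S^*)$, and it is precisely in comparing $\tau$ with the flat structure over the remaining $2$-cell that the commutator factor $\prod_i[\tilde A_i,\tilde B_i]$ enters. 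So the two formulas you quote must be merged into a single obstruction computation (Goldman's argument adapted to the bordered case) rather than applied independently; this bookkeeping is standard but it is where all the content lies, and in your write-up it remains a declared obstacle rather than a completed step --- which leaves your sketch at roughly the same level of rigour as the paper's proof, organised around a different pivot.
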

\begin{proof} Let $ \pi: P \rightarrow S $ be a $ \mathbb{CP}^1 $-bundle with minimum compactification, after a flipping of an invariant fiber we get another $\mathbb{CP}^1$-bundle $ \pi': P' \rightarrow S $ with a birationally equivalent Riccati foliation. It follows from Proposition \ref{paridadestiefelwhitney} that flipping changes the topological class of bundles and therefore the 2nd Stiefel-Whitney classes $w_2(P)$ and $w_2(P')$ have distinct parities.
	
	If $ \rho $ lifts to $ SL_2(\mathbb{C})$, we have that each generator of $\pi_1(S^*)$ lifts to a matrix in $SL_2(\mathbb{C})$, where the product given by the equation (\ref{produtosl2c}) of these matrices is $Id$. The matrices related to the local monodromy representations come from linear differential equations with simple poles used to projectivize and thus obtain the local Riccati model. When we do one flipping the sign of that matrix will change, changing the compactification and therefore the product of all matrices is $-Id$ in that compactification the representation $\rho$ does not lift.
	
	We will obtain two families that alternate parity when making a flipping: in one of the families, the parity is even and is odd in the others.
\end{proof}

We obtain a version analogous to Theorem 3.10 of Goldman's thesis \cite{GoT}:

\begin{Proposition} \label{liftastiefelwhitney}
	At the minimum compactification, the representation $\rho:\pi_1(S^*) \rightarrow PSL_2(\mathbb{C})$ lifts to $ SL_2(\mathbb{C})$ if and only if $w_2(P)$ is even. 
\end{Proposition}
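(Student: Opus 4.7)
The plan is to interpret the biconditional via the classical obstruction theory for lifting a $PSL_2(\mathbb{C})$-bundle on a closed oriented surface to an $SL_2(\mathbb{C})$-bundle, and then to match the algebraic condition stated in the proposition (that the product in equation (\ref{produtosl2c}) equals $Id$) with that obstruction.

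First, I would recall the standard fact: for a topological $PSL_2(\mathbb{C})$-bundle $P\to S$ over an oriented surface, the Bockstein of the central extension $1\to\mathbb{Z}/2\to SL_2(\mathbb{C})\to PSL_2(\mathbb{C})\to 1$ produces the obstruction to lifting the structure group, which lives in $H^2(S,\mathbb{Z}/2)\cong\mathbb{Z}/2$ and coincides with $w_2(P)$. Hence $P$ lifts to an $SL_2(\mathbb{C})$-bundle if and only if $w_2(P)\equiv 0\pmod 2$.

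Next I would translate the algebraic lifting condition of the statement into this topological one. On $S^*$ the fundamental group is free, so a global lift $\tilde{\rho}\colon \pi_1(S^*)\to SL_2(\mathbb{C})$ of $\rho$ always exists, and the suspension bundle over $S^*$ carries a corresponding flat $SL_2(\mathbb{C})$-structure; the question is whether it extends across the cusps to a global $SL_2(\mathbb{C})$-bundle on $S$. The minimum compactification pins down local Riccati models $\alpha w\,dz-z\,dw=0$, $z\,dw-dz=0$, $dw=0$ at the cusps, each arising from a canonical $\mathfrak{sl}_2(\mathbb{C})$-valued meromorphic connection whose local monodromy designates a preferred $SL_2(\mathbb{C})$-lift $\tilde{C}_j$ of each $C_j=\rho(c_j)$. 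Extending the local $SL_2(\mathbb{C})$-frames across the cusps to a global $SL_2(\mathbb{C})$-bundle on $S$ amounts precisely to the existence of lifts $\tilde{A}_i,\tilde{B}_i$ making the product (\ref{produtosl2c}) equal to $Id$, which is the definition of $\rho$ lifting in the sense of the statement.

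Combining the two steps yields the biconditional: $\rho$ lifts at the minimum compactification $\iff$ the $SL_2(\mathbb{C})$ extension across $S$ exists $\iff$ $w_2(P)\equiv 0\pmod 2$. As a consistency check, this is compatible with the preceding proposition: a single flipping of an invariant fibre simultaneously multiplies exactly one $\tilde{C}_j$ by $-1$ and toggles the parity of $w_2$ via Proposition \ref{paridadestiefelwhitney}, so both sides of the biconditional change together. The main obstacle is pinning down cleanly the identification between the three families of local models of the minimum compactification and their canonical $SL_2(\mathbb{C})$-lifts in a single framework that handles the non-parabolic case with $\Re\alpha=0$, the parabolic case, and the trivial case uniformly, so that the obstruction-theoretic matching applies without case-by-case ambiguity.
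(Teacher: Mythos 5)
Your route is genuinely different from the paper's. The paper argues by deformation: it connects a liftable representation $\rho$ to the trivial representation by a path inside the locus where the product (\ref{produtosl2c}) stays equal to $Id$, and invokes the fact that $w_2$ of the minimum compactification is a locally constant invariant, hence constant along the path and equal to its value $0$ at the trivial representation; the converse direction is then absorbed by the preceding proposition, which says a single flipping toggles both the sign of one $\tilde{C}_j$ and the parity of $w_2$. You instead go through the Bockstein of the central extension $1\to\mathbb{Z}/2\to SL_2(\mathbb{C})\to PSL_2(\mathbb{C})\to 1$, identifying $w_2(P)$ with the obstruction to lifting the structure group. Your approach buys both implications at once and avoids two points the paper's proof leaves implicit, namely the path-connectivity of the set of representations satisfying the lifting relation and the continuity of $w_2$ along a path on which local monodromies may degenerate (a non-parabolic local monodromy becoming trivial changes the local model of the minimum compactification, so ``continuity of $w_2$'' is not automatic there). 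What the paper's argument buys is that it sidesteps any explicit obstruction-theoretic bookkeeping at the cusps.

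The one substantive gap is the one you flag yourself: the identification of a canonical $SL_2(\mathbb{C})$-lift $\tilde{C}_j$ of each local monodromy with the local Riccati model of the minimum compactification, and the verification that the parity of the resulting rank-two extension across the cusps is exactly $w_2(P)$. This is not a formality — it is precisely where the hypothesis ``at the minimum compactification'' enters, since flipping a fibre changes both the preferred lift (by $-Id$) and $w_2$, so the biconditional is false for an arbitrary compactification. To close it, realize each local model as the projectivization of a rank-one-pole $\mathfrak{sl}_2$-system (e.g.\ the diagonal system with eigenvalues $\pm\alpha/2$ for the non-parabolic model $\alpha w\,dz - z\,dw=0$, whose monodromy is the lift $\pm\operatorname{diag}(e^{\pi i\alpha},e^{-\pi i\alpha})$ with the sign fixed by the normalization $0\le\Re\alpha<1$), and compute that the determinant line of the glued rank-two bundle has degree of the parity of $w_2(P)$ if and only if the product of these preferred lifts with some choice of $\tilde{A}_i,\tilde{B}_i$ equals $Id$. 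The paper's own proof does not carry this out either, so your proposal is not weaker than the published argument; but as written, neither is complete on this point.
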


\begin{proof}Build a path in the character variety $$\displaystyle Hom (\pi_1(S^*), PSL_2(\mathbb {C})) \slash PSL_2 (\mathbb{C})$$ between one representation $ \rho $ which lifts and the trivial representation, that also lifts, preserving the lifting relation expressed in the equation (\ref{produtosl2c}) equal to $Id$.
	
Using the continuity of 2nd class Stiefel-Whitney, we can deduce that it will be constant along the path, and therefore equals to zero.
\end{proof}

\begin{Corollary}At the minimum compactification, the representation $\rho: \pi_1 (S^*) \rightarrow PSL_2 (\mathbb{C})$ lifts to $SL_2 (\mathbb{C})$ if and only if $ e(\sigma) \equiv k_0  \mod 2$. 
\end{Corollary}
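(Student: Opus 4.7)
The plan is to chain together the two immediately preceding results, since this corollary is a direct parity bookkeeping consequence of them.

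First I would invoke Proposition \ref{liftastiefelwhitney}, which tells us that, at the minimum compactification, the representation $\rho$ lifts to $SL_2(\mathbb{C})$ if and only if $w_2(P)$ is even, i.e.\ $w_2(P)\equiv 0 \pmod 2$.

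Next I would apply Lemma \ref{stiefelwhitneyexcessos}, which gives the congruence
\begin{equation*}
w_2(P) + k_0 \equiv e(\sigma) \pmod 2.
\end{equation*}
Since we are working modulo $2$, this rearranges to $w_2(P) \equiv e(\sigma) - k_0 \equiv e(\sigma) + k_0 \pmod 2$. Hence the condition $w_2(P)\equiv 0\pmod 2$ is equivalent to $e(\sigma)\equiv k_0 \pmod 2$, and combining with the previous step yields the claimed equivalence.

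There is no real obstacle here: the statement is a formal corollary, and the only thing worth being careful about is that both cited results are applied in the \emph{same} compactification (the minimum one), so that the $w_2(P)$ appearing in Proposition \ref{liftastiefelwhitney} and the $w_2(P)$ appearing in Lemma \ref{stiefelwhitneyexcessos} refer to the identical $\mathbb{CP}^1$-bundle. Since the hypothesis of the corollary fixes the minimum compactification, this matching is automatic.
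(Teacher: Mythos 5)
Your proof is correct and follows exactly the paper's own argument: combine Proposition \ref{liftastiefelwhitney} (lifting is equivalent to $w_2(P)$ even) with the parity congruence of Lemma \ref{stiefelwhitneyexcessos}. Nothing is missing, and your remark about keeping both results in the same (minimum) compactification is the right point of care.
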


\begin{proof} If the representation $ \rho $ lifts, it follows from the Proposition \ref{liftastiefelwhitney} that $ w_2(P)$ is even and using the Lemma \ref{stiefelwhitneyexcessos} will follow that $e(\sigma) \equiv k_0 \mod 2 $. Similarly, $e(\sigma)$ is shown to have parity other than $ k_0 $ when the representation $\rho$ doesn't lift.
\end{proof}

\begin{proof}[Proof of Theorem \ref{structcomexcessos}] Suppose $tang(\mathcal{F}_\rho^{min}, \sigma(S))$ is odd. If the $ w_2(P)$ is even, then $\sigma^2 \equiv w_2 (P) \equiv 0 \mod 2 $ and therefore it follows from the Proposition \ref{autointersection} that $ tang (\mathcal{F}_\rho^{min}, \sigma(S)) $ $ \equiv k_0 \mod 2$, so $k_0$ is odd. Similarly, if $ w_2(P)$ is odd, it follows that $k_0$ is even.
	
The other implication follows immediately from Lemma \ref{stiefelwhitneyexcessos} above.
\end{proof}

\end{document}